\documentclass[12pt]{article}
\usepackage{pslatex}
\usepackage{fancyhdr}
\usepackage{graphicx}
\usepackage{geometry}
\RequirePackage[latin1]{inputenc} \RequirePackage[T1]{fontenc}

\def\figurename{Figure} % Replace the colon that normally appears after the Figure number by a period.
\makeatletter
\renewcommand{\fnum@figure}[1]{\figurename~\thefigure.}
\makeatother

\def\tablename{Table} % Replace the colon that normally appears after the Figure number by a period.
\makeatletter
\renewcommand{\fnum@table}[1]{\tablename~\thetable.}
\makeatother
\usepackage{color}
                [2000/03/29 v1.0m Standard LaTeX package]

\usepackage{amsmath}
\usepackage{amssymb}
\usepackage{amsfonts}
\usepackage{amsthm,amscd}

\newtheorem{theo}{Theorem}[section]
\newtheorem{lem}{Lemma}[section]
\newtheorem{rem}{Remark}[section]

\newtheorem{defi}{Definition}[section]
\newtheorem{propo}{Proposition}[section]

\newcommand{\C}        {{ \mathcal C }}

\newcommand{\R}        {{{\rm I\! R}}}

\newcommand{\E}        {{{\rm I\! E}}}
\newcommand{\Q}        {{{\rm I\! Q}}}

\renewcommand{\P}        {{ {\rm I \hskip -2pt P}}}

\numberwithin{equation}{section}

\def\C{\mathbb C}
\def\P{\mathbb P}
\def\R{\mathbb R}
\def\E{\mathbb E}

\def\Q{\mathbb Q}

\def\E{\mathbb E}

\begin{document}
	\title{Asymptotic properties for fully coupled delayed forward-backward stochastic differential equations}
	
	\author{Auguste Aman $^{a}$ \thanks{aman.auguste@ufhb.edu.ci, corresponding author}\;\; and\; Clément Manga $^{b}$ \thanks{cmanga@univ-zg.sn} \\
	a. UFR Mathématiques et Informatique, Université Félix H. Boigny, Abidjan,\;\;\;\;\;\;\;\;\;\;\\Côte d'Ivoire\\
		b. UFR Sciences et Technologie, Université  Assane Seck, Ziguinchor, Sénégal\;\;\;\;\;\;\;\;\;}

	\date{}
	\maketitle\thispagestyle{empty} \setcounter{page}{1}
	
	% ------- [First Page Running Head] - place it immediately after title! ------
	\thispagestyle{fancy} \fancyhead{}
	\fancyfoot{}
	\renewcommand{\headrulewidth}{0pt}
	%------------------------------------
	\begin{abstract}
We investigate the asymptotic behavior of solutions to a class of fully coupled forward-backward stochastic differential equations with time-delayed generators. Such systems arise naturally in stochastic models with memory effects and constitute a significant extension of the classical fully coupled FBSDE framework. The presence of delay introduces additional analytical difficulties due to the dependence of the coefficients on the past trajectories of the solution processes and the resulting non-Markovian structure.
Under suitable assumptions on the coefficients, we study the asymptotic properties of a perturbed delayed FBSDE driven by a small noise parameter. We first establish the convergence in distribution of the associated solution processes as the perturbation parameter tends to zero. We then prove almost sure convergence towards the solution of the corresponding deterministic limiting system. As a consequence of these asymptotic results, we derive a large deviation principle for the solution processes.
Our results extend the asymptotic analysis of Cruzeiro, Gomes and Zhang (2014) from the classical fully coupled FBSDE setting to the delayed framework, and complement existing works on weakly coupled delayed forward-backward systems. They provide, to the best of our knowledge, the first large deviation principle for fully coupled forward-backward stochastic differential equations with delayed generators.
	\end{abstract}

	\vspace{.08in}\textbf{MSC}: 34F05; 60H10; 60F10; 60H30; 60G07 \\
	\vspace{.08in}\textbf{Keywords}: Forward stochastic differential equations; Backward Stochastic differential equations; Delayed generators; Large deviation principle, Meyer-Zheng topologie.

	\section{Introduction} 
	
Fully coupled forward-backward stochastic differential equations (FBSDEs), introduced firstly by Antonelli \cite{Antonelli}, have become one of the fundamental tools of modern stochastic analysis. Besides their intrinsic probabilistic interest, they provide a probabilistic representation for quasilinear parabolic partial differential equations, play a central role in stochastic optimal control, differential games and mathematical finance, and have found numerous applications in economics, finance and engineering (see \cite{el1,Kal, KJL, el3}). To our knowledge, there are two classes of global existence and uniqueness results for the FBSDE. One is given by Ma and Yong \cite{MY}, Ma, Protter and Yong \cite{MPY}, via a PDE approach, under an assumption of nondegeneracy of the forward equation. The other is given by Hu and Peng \cite{HP}, Peng and Wu \cite{PW}, based on stochastic Hamiltonian systems, under a monotonicity condition.
	
	A significant extension of the classical BSDE theory was proposed by Delong and Imkeller \cite{DI1}, who introduced BSDEs with time-delayed generators. In these equations the generator depends on the past trajectories of the solution, leading to genuinely non Markovian dynamics. They established existence and uniqueness under suitable smallness assumptions on the time horizon or the Lipschitz constant and showed, through counterexamples, that these assumptions are essentially optimal.
	
	This theory was later generalized to jump processes in \cite{DI2}, where Malliavin differentiability of the solution was investigated.
	
	Delayed BSDEs naturally arise in mathematical finance, stochastic control and reinsurance; see \cite{D1,D2}.	
		
More recently, motivated by stochastic optimal control problems involving memory and delay effects, Aman et al. \cite{AHJ} introduced a general framework of fully coupled forward-backward stochastic differential equations with delayed generators (delayed FBSDEs). 

Let now introduced its perturbed version of the form
\begin{eqnarray}\label{Eq3}
\left\{
\begin{array}{lll}
X^{\varepsilon}(t)&=& x+\displaystyle\int^{t}_{0}b(s,X^{\varepsilon}_s,Y^{\varepsilon}_s,Z^{\varepsilon}_s)ds+\sqrt{\varepsilon}\int_{0}^{t}\sigma(s,X^{\varepsilon}_s, Y^{\varepsilon}_s)dW(s)\\\\
		Y^{\varepsilon}(t)&=& \displaystyle g(X^{\varepsilon}_T)+\int^{T}_{t}f(s,X^{\varepsilon}_s,Y^{\varepsilon}_s,Z^{\varepsilon}_s)ds-\int_{t}^{T}Z^{\varepsilon}(s)dW(s).
\end{array}
\right.
\end{eqnarray}
where $(X^{\varepsilon}_s,Y^{\varepsilon}_s,Z^{\varepsilon}_s)$ is a stochastic functional $(X^{\varepsilon}_s(\theta),Y^{\varepsilon}_s(\theta),Z^{\varepsilon}_s(\theta))_{-T\leq\theta\leq 0}$ defined by \newline $(X^{\varepsilon}_s(\theta),Y^{\varepsilon}_s(\theta),Z^{\varepsilon}_s(\theta))=(X^{\varepsilon}(s+\theta),Y^{\varepsilon}(s+\theta),Z^{\varepsilon}(s+\theta))$ for all $\theta\in [-T,0]$.

A prototypical example is 
\begin{eqnarray}\label{example23}
\left\{
\begin{array}{lll}
X^{\varepsilon}(t)&=&\displaystyle x+\sqrt{\varepsilon}\int_0^t\frac{\sigma}{T}\left(\int_0^sY^{\varepsilon}(u)\,du\right)dW(s),\\\\
Y^{\varepsilon}(t)&=&\displaystyle\frac1T\int_0^TX^{\varepsilon}(s)\,ds-\int_t^TZ^{\varepsilon}(s)\,dW(s),
\end{array}
\right.
\end{eqnarray}
which arise in a particular case when $b$ and $f$ are identically null and $\sigma, g: C([-T,0])\rightarrow\R$ are defined by
\begin{eqnarray*}
\sigma(s,x_s,y_s)=\int_{-T}^{0}\mu(s+u)y(s+u)\alpha(du),
\end{eqnarray*}
 such that 
\begin{eqnarray*}
\mu(s)=\left\{
\begin{array}{lll}
\sigma &\mbox{if} & s\geq 0\\\\
0 &\mbox{if}& s< 0,	
\end{array}
\right.
\end{eqnarray*} 
and 
\begin{eqnarray*}
g(x_T)=\int_{-T}^{0}x(T+u)\alpha(du),
\end{eqnarray*}
where $\alpha$ is a uniform measure of probability on $[-T,0]$.

These examples illustrate how delay effects naturally arise in fully coupled FBSDEs and motivate the asymptotic analysis carried out in the present paper

Despite the extensive literature on asymptotic properties of classical FBSDEs and the recent developments on delayed BSDEs, no asymptotic theory is currently available for fully coupled FBSDEs with delayed generators. This lack of results is not merely technical. The simultaneous presence of memory effects and full coupling fundamentally changes the mathematical structure of the problem. In particular, the delay destroys the Markov property and introduces an infinite-dimensional state variable, whereas the bidirectional coupling prevents the use of decoupling techniques and PDE arguments available for classical FBSDEs.

The analysis presented here differs substantially from the existing literature. Owing to the delay, the forward dynamics become path-dependent and must be analyzed on an infinite-dimensional state space. Consequently, the associated decoupling field is characterized by an infinite-dimensional semilinear PDE. Furthermore, the strong interaction between the forward and backward equations prevents the direct use of contraction arguments available for weakly coupled systems. New stability estimates and perturbation arguments are therefore required.

The present paper provides the first asymptotic analysis of fully coupled delayed FBSDEs. Our contributions are threefold.
\begin{itemize}

\item [(i)] we derive quantitative stability estimates with respect to the perturbation parameter;
\item [(ii)] we establish convergence in distribution and almost sure convergence
\item [(iii)] combining these estimates with the weak convergence approach introduced by Cruzeiro et al.\cite{Cal}, we prove a large deviation principle for fully coupled delayed FBSDEs.
\end{itemize}

Consequently, the present paper closes the gap between the asymptotic theory of classical fully coupled FBSDEs developed by Cruzeiro et al. and the recent theory of delayed stochastic systems. It establishes the first large deviation principle for fully coupled forward-backward stochastic differential equations with delayed generators.

The remainder of the paper is organized as follows. Section 2 is devoted to the formulation of the problem and some preliminary results. Section 3 contains the main asymptotic analysis. More precisely, Subsection 3.1 is devoted to convergence in distribution, Subsection 3.2 establishes almost sure convergence, and Subsection 3.3 is concerned with the proof of the large deviation principle.

\section{Framework and preliminaries results}
	\subsection{Framework}
	Let $T>0$ be a fixed time horizon, and let $(\Omega,\mathcal{F},\P,(\mathcal{F}_t)_{0\leq t\leq T})$ be a filtered probability space satisfying the usual conditions, namely that the filtration $(\mathcal{F}_t)_{0\leq t\leq T}$ is complete, right-continuous, and generated by a one-dimensional Brownian motion $(W_t)_{0\leq t\leq T}$. We recall the following perturbed fully coupled delayed forward-backward stochastic differential equation (delayed FBSDE): for a given $\varepsilon>0$ and a fixed initial $(t,x)\in [0,T]\times\R$,
	\begin{eqnarray}\label{Eq1}
	\left\{
	\begin{array}{lll}
	X^{t,x,\varepsilon}(s)&=& \displaystyle x+\int^{s\vee t}_{t} b(r,X_r^{t,x,\varepsilon},Y_r^{t,x,\varepsilon},Z_r^{t,x,\varepsilon})dr+\sqrt{\varepsilon}\int^{s\vee t}_t \sigma(r,X_r^{t,x,\varepsilon},Y_r^{t,x,\varepsilon})dW(r) \\\\
	Y^{t,x,\varepsilon}(s)&=& \displaystyle g(X_T^{t,x,\varepsilon}) + \int_{s\vee t}^T f(r,X_r^{t,x,\varepsilon}, Y_r^{t,x,\varepsilon},Z_r^{t,x,\varepsilon})dr - \int_{s\vee t}^T Z^{t,x,\varepsilon}(r) d W(r),\;\; , s\in [0,T].	
	\end{array}
	\right.
	\end{eqnarray}
We also introduce the following delayed system of ordinary differential equations (ODEs): for a fixed $(t,x)\in [0,T]\times\R$,
\begin{eqnarray}\label{u}
\left\{
\begin{array}{lll}
\mathcal{X}^{t,x}(s)&=& \displaystyle x + \int_{t}^{t\vee s} b(s,\mathcal{X}^{t,x}_s, \mathcal{Y}^{t,x}_s,0)ds	\\\\
\mathcal{Y}^{t,x}(s)&=&\displaystyle g(\mathcal{X}^{t,x}_T) + \int_{t\vee s}^T f(r,\mathcal{X}^{t,x}_r, \mathcal{Y}^{t,x}_r,0)ds, \;\; s\in [0,T].
\end{array}
\right.
\end{eqnarray}

Throughout this paper, we shall work within the framework of the following topological vector spaces.
\begin{description}
\item $ \bullet $ Let $ L_{-T}^{\infty } (\mathbb{R} )$ denote the space of bounded, measurable functions $y: [-T,0] \rightarrow \mathbb{R} $\\
		satisfying
		$$
		\sup\limits_{-T\leq t\leq 0} \mid y(t) \mid^2 < +\infty,
		$$
\item $\bullet $ Let $L_{-T}^2 (\mathbb{R} ) $ denote the space of measurable functions $ z : [-T;0] \rightarrow \mathbb{R} $ satisfying
		$$   \int_{-T}^0 \mid z(t) \mid^2 dt < +\infty,
		$$		
\item $\bullet$ Let $ \mathcal{S}^2(\R)$ denote the space of all predictable process $\eta$ with values in $\R$ such that $$\E\left(\sup_{0\leq s\leq T}e^{\beta s}|\eta(s)|^2\right)<+\infty,$$
		\item $\bullet$ Let $\mathcal{H}^2(\R)$ denote the space of all predictable process $\eta$ with values in $\R$ such that $$\E\left(\int_{0}^Te^{\beta s}|\eta(s)|^2ds\right)<+\infty.$$
\end{description}
\begin{defi}
\begin{itemize}
\item [(a)] A triple of stochastic processes $(X^{t,x,\varepsilon},Y^{t,x,\varepsilon},Z^{t,x,\varepsilon})$ is said to be an adapted solution of the delayed FBSDE \eqref{Eq1} if, $(X^{t,x,\varepsilon},Y^{t,x,\varepsilon},Z^{t,x,\varepsilon})\in \mathcal{S}^2(\R)\times\mathcal{S}^{2}(\R)\times \mathcal{H}^2(\R)$and if equation \eqref{Eq1} is satisfied $\P$-almost surely.
\item [(b)] A pair of deterministic functions $(\mathcal{X}^{t,x},\mathcal{Y}^{t,x})$ is called a solution of the delayed system of ordinary differential equations \eqref{u} if, $(\mathcal{X}^{t,x},\mathcal{Y}^{t,x})\in L_{-T}^{\infty }\times L_{-T}^{\infty}$ and if system \eqref{u} is satisfied. 
\end{itemize}
\end{defi}
To conclude this subsection, we state the assumptions under which all the results of this paper will be established.
\begin{description}		
\item {(\bf A1)} $\phi: \Omega \times [-T,T]\times L_{-T}^{\infty}\times L_{-T}^{\infty}\times L_{-T}^2 (\mathbb{R}) \rightarrow \mathbb{R} $ be a product measurable and $ {\bf F} $-adapted function. Assume that there exist a probability measure $\alpha$ on $([-T,0],\mathcal{B}([-T,0]))$ and a positive constant $K>0$ such that, for $\P \otimes \lambda $-a.e. $(\omega ,t) \in \Omega \times[0,T]$ and for every $ u:= (\varphi,\psi,\eta); \, \ u':=(\varphi',\psi',\eta')$ in  $L_{-T}^{\infty} (\mathbb{R}) \times L_{-T}^{\infty} (\mathbb{R}) \times L_{-T}^2 (\mathbb{R})$ the following conditions hold:
\begin{itemize}
\item [(i)] (Delayed Lipschitz condition)
$\displaystyle
|\phi(t, u) - \phi(t,u')|^2 
\leq  K \int_{-T}^0  \Arrowvert u(r) - u'(r) \Arrowvert^2 \alpha (dr)$,
where $\Arrowvert u(r)\Arrowvert^2=|\varphi(r)|^2+|\psi(r)|^2+|\eta(r)|^2$.
\item [(ii)]For $t<0,\; \; \phi(t,u)= 0 $,
\item [(iii)] (Square integrability) $\displaystyle \mathbb{E} \left[ \int_{0}^T \vert \phi(t,0) \vert^2 dt\right] <  +\infty$.
\end{itemize}
These assumptions are imposed for $\phi=b, f$.
\item {(\bf A2)} $\sigma: \Omega \times [-T,T]\times L_{-T}^{\infty}\times L_{-T}^{\infty}\times L_{-T}^2 (\mathbb{R}) \rightarrow \mathbb{R}$ be a product measurable and $ {\bf F} $-adapted function. Assume that there exist a probability measure $\alpha$ on $([-T,0],\mathcal{B}([-T,0]))$ and a positive constant $K>0$ such that, for $\P \otimes \lambda $-a.e. $(\omega ,t) \in \Omega \times[0,T]$ and for every $w := (\varphi,\psi); \, \ w':=(\varphi',\psi')$ in  $L_{-T}^{\infty} (\mathbb{R}) \times L_{-T}^{\infty} (\mathbb{R})$ the following conditions hold:
\begin{itemize}
\item [(i)] (Delayed Lipschitz condition)
$\displaystyle
|\sigma(t, w) - \sigma(t,w')|^2 
\leq  K \int_{-T}^0  \Arrowvert w(r) - w'(r) \Arrowvert^2 \alpha (dr)$,
where $\Arrowvert w(r)\Arrowvert^2=|\varphi(r)|^2+|\psi(r)|^2$.
\item [(ii)]For $t<0,\; \; \sigma(t,w)= 0 $,
\item [(iii)] (Square integrability) $\displaystyle \mathbb{E} \left[ \int_{0}^T \vert \sigma(t,0) \vert^2 dt\right] <  +\infty$.
\end{itemize}		
\item {(\bf A3)} Let $ g: \Omega \times [-T,T]\times L_{-T}^2 (\mathbb{R}) \rightarrow \mathbb{R} $ be a product measurable and $ {\bf F} $-adapted function. Assume that there exist a probability measure $\alpha$ on $([-T,0],\mathcal{B}([-T,0]))$ and a positive constant $K>0$ such that
		$$\displaystyle |g(\varphi)- g(\varphi')|^2\leq K\int_{-T}^{0}|\varphi(r)-\varphi(r)|^2\alpha(dr).$$
\item {(\bf A4)} The functions $\sigma$ et $g$ satisfy also the following assumptions. There exist three constants $C,\lambda, \gamma$ such that 
\begin{itemize}
\item [(i)] $\forall\; t\in [0,T],\; (\varphi, \psi, \eta)\in L_{-T}^{\infty} (\mathbb{R}) \times L_{-T}^{\infty} (\mathbb{R})$, 
\begin{eqnarray*}
	|\sigma(t,\varphi,\psi)|+|g(\varphi)|\leq C.
\end{eqnarray*}
\item [(ii)] $\forall\; t\in [0,T],\; (\varphi, \psi)\in L_{-T}^{\infty} (\mathbb{R}) \times L_{-T}^{\infty} (\mathbb{R})$
\begin{eqnarray*}
	\langle\xi, a(t,\varphi, \psi)\xi\rangle \geq \lambda |\xi|^2, \;\; \forall\; \xi\in \R^{n}, 
\end{eqnarray*}
where $a(t,\varphi, \psi)=\sigma\sigma^{*}(t,\varphi,\psi)$.
\item [(iii)] The function $\sigma$ is differentiable with respect to $\psi$ and $\eta$ and its derivatives with respect to $\varphi$ and $\psi$ are $\gamma$-hölder in $\varphi$ and $\psi$, uniformly in $t$ and $\eta$.
\end{itemize}
\end{description}
\begin{rem}\label{R1}
\begin{itemize}
\item [(a)] Assumption $(\bf A1)$-$(ii)$ and $(\bf A2)$-$(ii)$ allows us to extend the solutions of \eqref{Eq1} and \eqref{u} to negative times. More precisely, for every $s<0$, we set $(X^{t,x,\varepsilon}(s),Y^{t,x,\varepsilon}(s),Z^{t,x,\varepsilon}(s))=(x,Y^{t,x,\varepsilon}(t),0)$ and $(\mathcal{X}^{t,x}(s),\mathcal{Y}^{t,x}(s),0)=(x,\mathcal{Y}^{t,x}(t),0)$, which are consistent with equations \eqref{Eq1} and \eqref{u}, respectively.
\item [(b)] The notation $\phi(t,{\bf 0})$ appearing in condition $({\bf A1})$-$(iii)$ denotes the value of the coefficient $\phi$ evaluated at the null trajectory, namely $u_t=(0,0,0)$.
\item [(c)] The differentiability of the function $\sigma$ is defined as follows:  $\sigma$ is Fréchet differentiable at $(\varphi,\psi)\in L_{-T}^{\infty } (\mathbb{R}) \times L_{-T}^{\infty} (\mathbb{R})$ if there exists a continuous linear mapping $D\sigma(t,\varphi,\psi): L_{-T}^{\infty}(\mathbb{R})\times L_{-T}^{\infty} (\mathbb{R})\rightarrow \R$ such that
\begin{eqnarray*}
\sigma(t,\varphi+h,\psi+k)-\sigma(t,\varphi,\psi)=D\sigma(t,\varphi,\psi)(h,k)+o(\|(h,k)\|).
\end{eqnarray*}
for all $t\in [0,T]$.
\end{itemize}
\end{rem}
We conclude this subsection by stating the existence and uniqueness results for equations \eqref{Eq1} and \eqref{u}.    
\begin{propo}\label{Prop1}
Assume that conditions $({\bf A1})$-$({\bf A3})$ are satisfied. Moreover, suppose that either the time horizon $T$ or the Lipschitz constant $K$ is sufficiently small so that 
\begin{eqnarray}\label{c1}
		(25+ 144K)Ke \max (1,T^2) < 1.
\end{eqnarray}
Then, the delayed forward-backward stochastic differential equation \eqref{Eq1} admits a unique adapted solution $(X^{t,x,\varepsilon}, Y^{t,x,\varepsilon}, Z^{t,x,\varepsilon} )$. 
\end{propo}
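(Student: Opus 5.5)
The plan is a Picard (contraction) argument on the product space $\mathcal{M}:=\mathcal{S}^2(\R)\times\mathcal{S}^2(\R)\times\mathcal{H}^2(\R)$, with the weighted norm
\[
\|(X,Y,Z)\|_\beta^2=\E\Big(\sup_{0\le s\le T}e^{\beta s}|X(s)|^2\Big)+\E\Big(\sup_{0\le s\le T}e^{\beta s}|Y(s)|^2\Big)+\E\Big(\int_0^Te^{\beta s}|Z(s)|^2ds\Big),
\]
as in Delong and Imkeller \cite{DI1} and Aman et al.\ \cite{AHJ}. Fix $(U,V,N)\in\mathcal{M}$, extended to negative times as in Remark \ref{R1}(a). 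We define $\Phi(U,V,N)=(X,Y,Z)$ in two decoupled steps. First, $X$ is given explicitly by
\[
X(s)=x+\int_t^{s\vee t}b(r,U_r,V_r,N_r)\,dr+\sqrt{\varepsilon}\int_t^{s\vee t}\sigma(r,U_r,V_r)\,dW(r).
\]
Second, $(Y,Z)$ is the unique solution, given by the martingale representation theorem, of the BSDE with frozen data
\[
Y(s)=g(X_T)+\int_{s\vee t}^Tf(r,U_r,V_r,N_r)\,dr-\int_{s\vee t}^TZ(r)\,dW(r).
\]
Assumptions (\textbf{A1})(iii) and (\textbf{A2})(iii), together with the delayed Lipschitz conditions, show that $\Phi$ maps $\mathcal{M}$ into itself. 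A fixed point of $\Phi$ is exactly an adapted solution of \eqref{Eq1}, and uniqueness follows from the contraction property.

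For the contraction estimate, take two inputs and write $\delta$ for the differences. On the forward side, apply Doob's inequality to the stochastic integral and Cauchy--Schwarz to the $dr$-integral. This gives
\[
\E\sup_s e^{\beta s}|\delta X(s)|^2\le C(T+4\varepsilon)\,\E\int_0^T e^{\beta r}\big(|\delta b(r)|^2+|\delta\sigma(r)|^2\big)\,dr .
\]
On the backward side, apply It\^o's formula to $e^{\beta s}|\delta Y(s)|^2$ and then Burkholder--Davis--Gundy. This bounds $\E\sup e^{\beta s}|\delta Y|^2+\E\int e^{\beta s}|\delta Z|^2$ by two terms: $C\,\E\, e^{\beta T}|\delta g|^2$, and $\frac{C}{\beta}\E\int e^{\beta r}|\delta f(r)|^2dr$ (or $C\,T\,\E\int e^{\beta r}|\delta f(r)|^2dr$). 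The terminal term is bounded by (\textbf{A3}) and then by the forward estimate.

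The key lemma is the standard delay estimate. For $\phi=b,f,\sigma$, by (\textbf{A1})(i)--(ii) and Fubini,
\[
\int_0^Te^{\beta r}|\delta\phi(r)|^2dr\le K\int_{-T}^0\int_0^Te^{\beta r}\|\delta u(r+v)\|^2dr\,\alpha(dv)\le K\int_{-T}^0 e^{-\beta v}\alpha(dv)\int_0^Te^{\beta r}\|\delta u(r)\|^2dr .
\]
The factor $e^{-\beta v}\le e^{\beta T}$ is the delicate point. Choosing $\beta=1/T$ makes this factor at most $e$, which is where the constant $e$ in \eqref{c1} comes from. The $\mathcal{S}^2$ components are then handled by $\int_0^Te^{\beta r}|\delta U(r)|^2dr\le T\,\E\sup e^{\beta r}|\delta U|^2$, which produces the powers of $T$, absorbed into $\max(1,T^2)$.

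Collecting the terms, we obtain $\|\Phi(u)-\Phi(u')\|_\beta^2\le c\,\|u-u'\|_\beta^2$. The main obstacle is the bookkeeping needed to show $c\le(25+144K)Ke\max(1,T^2)$. The linear part $25K$ collects the constants from Doob/BDG applied once. The quadratic part $144K^2$ comes from the terminal condition $g(X_T)$ feeding the forward difference (itself of order $K$) back into the backward estimate. I would track the constants carefully, taking $\varepsilon\le1$ so that $\sqrt\varepsilon$ does not enlarge them, and using Young's inequality with fixed weights. Under \eqref{c1} we get $c<1$, so Banach's fixed point theorem gives existence and uniqueness in $\mathcal{M}$.
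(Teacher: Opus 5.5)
Your proposal follows essentially the same route as the paper, which simply defers to the Banach fixed-point argument of Theorem 3.1 in \cite{AHJ} in the weighted norm with $\beta=1/T$, and attributes the changed constant to the terminal term $g(X^{t,x,\varepsilon}_T)$; that term is exactly the source of the $144K^2$ contribution you identify. One small slip: after the shift $r\mapsto r+v$, the $Y$-input is evaluated on $[v,T+v]$, where its constant extension to negative times does not vanish in the difference, so for that component your delay lemma should end with $T\,\sup_{0\le r\le T}e^{\beta r}|\delta V(r)|^2$ rather than $\int_0^T e^{\beta r}|\delta V(r)|^2\,dr$ — harmless, since you bound the $\mathcal{S}^2$ components by the supremum anyway.
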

\begin{proof}
The proof follows essentially the same arguments as those used in Theorem 3.1 of \cite{AHJ}. The main difference arises from the form of the terminal condition in the backward equation. In the present framework, the terminal condition is given by $g(X^{t,x,\varepsilon}_T)$ whereas in \cite{AHJ} it is represented by a square-integrable random variable $\xi$. This distinction significantly affects the corresponding a priori estimates and explains why the conditions ensuring existence and uniqueness are different in the two settings. More precisely, in the present paper the required condition is
\begin{eqnarray*}
	(25+ 144K)Ke\max(1,T^2)< 1,
\end{eqnarray*}
while in \cite{AHJ} the corresponding condition is
\begin{eqnarray*}
	21Ke\max(1,T^2)< 1.
\end{eqnarray*}
\end{proof}

\begin{propo}
Assume $({\bf A1})$-$({\bf A3})$ are satisfied. Then, the delayed system of ordinary differential equations \eqref{u} admits a unique solution.
\end{propo}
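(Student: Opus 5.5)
The plan is to view the delayed ODE system \eqref{u} as the deterministic ($\varepsilon=0$, $Z\equiv 0$) analogue of \eqref{Eq1} and to obtain existence and uniqueness from a fixed point of a Picard-type map. The smallness condition \eqref{c1} of Proposition \ref{Prop1} will be used in the same way. Two routes are available. The first is to note that \eqref{u} is exactly \eqref{Eq1} with $\sigma\equiv 0$ and $Z\equiv 0$, with deterministic coefficients, and then invoke Proposition \ref{Prop1}. This is delicate because uniqueness of $Z$ must be shown to force $Z=0$. I will instead give a direct contraction argument, which mirrors the a priori estimates behind Proposition \ref{Prop1} and is simpler since no stochastic integrals appear.

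\emph{Step 1 (the map).} I work on $\mathcal{B}=\{(\mathcal{X},\mathcal{Y})\}$, the space of bounded measurable functions on $[0,T]$, extended to $[-T,0)$ as in Remark \ref{R1}(a). I equip it with the weighted norm $\|(\mathcal{X},\mathcal{Y})\|_\beta^2=\sup_{s\in[0,T]}e^{\beta s}(|\mathcal{X}(s)|^2+|\mathcal{Y}(s)|^2)$. Given $(U,V)\in\mathcal{B}$, I define $\Phi(U,V)=(\mathcal{X},\mathcal{Y})$ by
\[
\mathcal{X}(s)=x+\int_t^{t\vee s}b(r,U_r,V_r,0)\,dr,\qquad \mathcal{Y}(s)=g(U_T)+\int_{t\vee s}^T f(r,U_r,V_r,0)\,dr.
\]
This is well defined and lands in $\mathcal{B}$ by ({\bf A1})(i),(iii) and ({\bf A3}). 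Indeed, $|\phi(r,U_r,V_r,0)|^2\le 2|\phi(r,0)|^2+2K\int_{-T}^0(|U(r+\theta)|^2+|V(r+\theta)|^2)\alpha(d\theta)$ is integrable, and here the coefficients are deterministic.

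\emph{Step 2 (contraction estimate).} For two inputs, Cauchy--Schwarz and the delayed Lipschitz condition give
\[
|\Delta\mathcal{X}(s)|^2\le TK\int_0^T\int_{-T}^0\big(|\Delta U(r+\theta)|^2+|\Delta V(r+\theta)|^2\big)\alpha(d\theta)\,dr .
\]
The same bound holds for $\Delta\mathcal{Y}$, with an extra term $2K\int_{-T}^0|\Delta U(T+\theta)|^2\alpha(d\theta)$ coming from $g$. The key manipulation, as in Delong--Imkeller, is Fubini together with the change of variables $r\mapsto r+\theta$. Using the fact that the extension to negative times does not contribute to the differences, this yields
\[
\int_0^T e^{\beta r}\int_{-T}^0|\Delta U(r+\theta)|^2\alpha(d\theta)\,dr\le \int_{-T}^0 e^{-\beta\theta}\alpha(d\theta)\int_0^T e^{\beta r}|\Delta U(r)|^2dr .
\]
Choosing $\beta=1/T$ bounds $e^{-\beta\theta}$ by $e$, which is the origin of the factor $e$ in \eqref{c1}. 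Collecting terms gives $\|\Phi(U,V)-\Phi(U',V')\|_\beta^2\le cKe\max(1,T^2)\|(\Delta U,\Delta V)\|_\beta^2$ for a numerical constant $c$ well below $25+144K$. Condition \eqref{c1} then makes $\Phi$ a strict contraction, and the Banach fixed point theorem gives a unique fixed point, which is the unique solution of \eqref{u}.

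\emph{Main obstacle.} The statement as written assumes only ({\bf A1})--({\bf A3}), without \eqref{c1}. For a fully coupled problem with terminal coupling through $g$, some smallness of this kind appears unavoidable, so I will read the proposition as holding under \eqref{c1}. The delicate point is controlling the terminal term $g(U_T)$ in the weighted norm. I expect the factor $e^{\beta T}$ to be absorbed using $\beta T=1$, again costing a factor $e$. If this constant bookkeeping fails, the fallback is to deduce the result from Proposition \ref{Prop1} with $\sigma\equiv0$. Since the coefficients are deterministic, the unique solution of \eqref{Eq1} is then deterministic by uniqueness, so its $Z$-component vanishes and it solves \eqref{u}. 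Conversely, any solution of \eqref{u}, taken together with $Z=0$, solves \eqref{Eq1}, which gives uniqueness.
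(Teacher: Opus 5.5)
The paper gives no proof of this proposition at all; it is stated bare after Proposition~\ref{Prop1}. Your argument therefore supplies a proof rather than reproducing one, and it is sound.

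You are right that $({\bf A1})$--$({\bf A3})$ alone cannot suffice, and this is a real defect of the statement, not a technicality. Take $\alpha=\delta_0$, $K=1$, $g\equiv 0$, $t=0$, $T=\pi/2$, and for $r\ge 0$ set $b(r,\varphi,\psi,\eta)=\psi(0)$ and $f(r,\varphi,\psi,\eta)=\varphi(0)$, with both equal to $0$ for $r<0$. Then \eqref{u} reads $\mathcal{X}'=\mathcal{Y}$, $\mathcal{Y}'=-\mathcal{X}$, $\mathcal{X}(0)=x$, $\mathcal{Y}(\pi/2)=0$. Its candidate solutions are $\mathcal{X}(s)=x\cos s+B\sin s$ and $\mathcal{Y}(s)=-x\sin s+B\cos s$. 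There is no solution for $x\neq 0$ and there are infinitely many for $x=0$. So your reading of the proposition under a smallness condition such as \eqref{c1} is the right fix.

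With that condition your bookkeeping closes with room to spare. For deterministic functions the weight $e^{\beta s}$ is not needed. Work in the plain sup norm $\|\Delta\|^2=\sup_{[0,T]}(|\Delta U|^2+|\Delta V|^2)$. Cauchy--Schwarz gives $\sup|\Delta\mathcal{X}|^2\le KT^2\|\Delta\|^2$ and $\sup|\Delta\mathcal{Y}|^2\le 2K(1+T^2)\|\Delta\|^2$. Hence $\Phi$ is a contraction as soon as $(2+3T^2)K<1$, which \eqref{c1} implies.

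There are two minor corrections.

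First, the paper extends the backward component by $\mathcal{Y}(s)=\mathcal{Y}(t)$ for $s<0$. So differences of the second input component do not vanish at negative times; they equal $\Delta V(t)$. Your Fubini inequality therefore holds for $\Delta U$ but can fail for $\Delta V$, for instance when $\Delta V(t)\neq 0$ while $\int_0^T e^{\beta r}|\Delta V(r)|^2\,dr$ is small. Since your working norm is a sup norm, simply bound $|\Delta V(r+\theta)|^2\le\|\Delta\|^2$ for $r+\theta<0$; this costs nothing.

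Second, in your fallback, the claim that the unique solution of \eqref{Eq1} with $\sigma\equiv 0$ is deterministic ``by uniqueness'' is not automatic. You would need, for instance, that the Picard map behind Proposition~\ref{Prop1} preserves the closed subspace of deterministic triples with $Z=0$. The direct contraction makes that route unnecessary anyway.
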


\subsection{Preliminaries results}
This subsection is devoted to a collection of preliminary estimates that will be repeatedly used in the sequel. The following four lemmas constitute the main technical tools underlying the proofs of our principal results.
	
\begin{lem}\label{l1}
Suppose that assumptions $({\bf A1})$--$({\bf A3})$ hold. Furthermore, assume that either the time horizon $T$ or the Lipschitz constant $K$ is sufficiently small so that
\begin{eqnarray}\label{l}
(57 + 432K)K e \max(1, T^2)< 1.
\end{eqnarray}			
Then there exists a positive constant $C$, independent of $\varepsilon$, such that for every $x,y\in\R$,
\begin{eqnarray*}
&&\E\left[\sup_{ t\leq s \leq T}e^{\beta s} |X^{t,x,\varepsilon}(s)- X^{t,y,\varepsilon}(s)|^2 +  \sup_{ t\leq s \leq T}e^{\beta s} |Y^{t,x,\varepsilon}(s)- Y^{t, y,\varepsilon}(s)|^2\right.\nonumber\\
&&\left.+ \int_t^T e^{\beta s} |Z^{t,x,\varepsilon}(s)- Z^{t, y,\varepsilon}(s)|^2 ds \right]\leq  C |x-y|^2.
\end{eqnarray*}
\end{lem}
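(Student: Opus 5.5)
We estimate the differences
\[
\hat X=X^{t,x,\varepsilon}-X^{t,y,\varepsilon},\qquad \hat Y=Y^{t,x,\varepsilon}-Y^{t,y,\varepsilon},\qquad \hat Z=Z^{t,x,\varepsilon}-Z^{t,y,\varepsilon}
\]
directly, in the same weighted norms $\mathcal S^2\times\mathcal S^2\times\mathcal H^2$ used for Proposition \ref{Prop1}. The idea is to repeat the contraction-type a priori estimate of that proof. The only new ingredient is the nonzero initial difference $x-y$, which enters as a source term.

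\textbf{Forward component.} Subtract the two forward equations in \eqref{Eq1}, multiply by $e^{\beta s}$, take suprema and expectations, and apply Doob's (or Burkholder--Davis--Gundy) inequality to the stochastic integral. Since $\varepsilon\le 1$, the factor $\sqrt{\varepsilon}$ only helps, which is why $C$ does not depend on $\varepsilon$. This gives
\[
\E\sup_{t\le s\le T}e^{\beta s}|\hat X(s)|^2\le 3e^{\beta T}|x-y|^2+3T\,\E\int_t^Te^{\beta s}|\Delta b(s)|^2ds+12\,\E\int_t^Te^{\beta s}|\Delta\sigma(s)|^2ds .
\]
We then bound $|\Delta b|^2$ and $|\Delta\sigma|^2$ with the delayed Lipschitz conditions (A1)(i) and (A2)(i). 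The resulting delay integrals are handled with the standard Delong--Imkeller change of variables:
\[
\int_t^T e^{\beta s}\int_{-T}^0|\hat X(s+r)|^2\alpha(dr)\,ds\le \int_{-T}^0 e^{-\beta r}\alpha(dr)\int_0^T e^{\beta u}|\hat X(u)|^2du .
\]
By Remark \ref{R1}(a), the extension to negative times contributes only terms of the form $|x-y|^2$, or $|\hat Y(t)|^2$ weighted by $\alpha([-T,-s])$; the latter is absorbed into the $\sup$ of $\hat Y$. Choosing $\beta=1/T$ turns $\int e^{-\beta r}\alpha(dr)\int_0^T(\cdot)\,du$ into at most $eT\sup(\cdot)$ or $e\int(\cdot)$. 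This is how the factor $Ke\max(1,T^2)$ arises.

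\textbf{Backward component.} Apply It\^o's formula to $e^{\beta s}|\hat Y(s)|^2$. This yields a bound on $\E\int e^{\beta s}|\hat Z|^2$, and, via BDG, on $\E\sup e^{\beta s}|\hat Y|^2$, in terms of three quantities: the terminal term $e^{\beta T}\E|g(X_T^{t,x,\varepsilon})-g(X_T^{t,y,\varepsilon})|^2$, which (A3) bounds by $K e\,\E\sup e^{\beta s}|\hat X|^2$; the $f$-increments; and Young-inequality remainders. The $f$-increments are treated as in the forward step.

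\textbf{Closing the estimate.} Let $\Gamma$ denote the left-hand side of the lemma. Adding the forward and backward inequalities gives
\[
\Gamma\le C_0|x-y|^2+\rho\,\Gamma,\qquad \rho=(57+432K)Ke\max(1,T^2).
\]
Condition \eqref{l} says exactly that $\rho<1$, and hence $\Gamma\le C_0(1-\rho)^{-1}|x-y|^2$.

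The main obstacle is the bookkeeping in this closing step. The terminal coupling produces a product of $K$ (from $g$) with $K$ (from $b,\sigma$ inside $\sup|\hat X|^2$), which is the origin of the $K^2$ term. We must track every numerical constant carefully, including the BDG constant and the factor $3$ from splitting the $\hat X$-equation, so that the total coefficient multiplying $\Gamma$ is no larger than $(57+432K)Ke\max(1,T^2)$. If the constants do not close exactly, we would instead rebalance the Young-inequality weights, which changes only $C_0$.
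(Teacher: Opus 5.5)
Your route is the paper's: a forward estimate and a backward It\^o estimate in the $e^{\beta s}$-weighted norms with $\beta=1/T$. The delayed Lipschitz terms are handled by Fubini, each giving a factor $Ke\max(1,T)$, the terminal coupling produces the $K^2$ term, and $\Gamma$ is absorbed at the end.

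The gap is in the one step where \eqref{l} is actually used. You \emph{define} $\rho=(57+432K)Ke\max(1,T^2)$ and note that \eqref{l} says $\rho<1$, but nothing in your estimates shows that the coefficient of $\Gamma$ you obtain is at most $\rho$. Your contingency does not repair this, because rebalancing Young weights never ``changes only $C_0$''. Here $C_0$ collects only the $|x-y|^2$ coming from the forward initial datum. Every piece produced by splitting $\hat Y\,\Delta f$, or the $\Delta b$, $\Delta\sigma$, $\Delta g$ terms, comes back via (\textbf{A1})--(\textbf{A3}) and Fubini as a multiple of $\Gamma$. In a fully coupled system these weights are precisely what fix the contraction factor. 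So as written you get the lemma only under some unspecified smallness condition, not under \eqref{l}.

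Your forward display also drops a factor $e^{\beta T}$. One only has $e^{\beta s}|\int_t^s\Delta b\,dr|^2\le T e^{\beta T}\int_t^T e^{\beta r}|\Delta b|^2\,dr$, and similarly for the $\sigma$-term. The paper sidesteps this by applying It\^o's formula to $e^{\beta s/2}\bar X(s)$, paying the factor $(1-\frac34\beta^2T^2)^{-1}$ in \eqref{x0}.

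The missing computation can be carried out along your lines.
\begin{itemize}
\item Forward part: with $e^{\beta T}=e$ restored and Doob's inequality for the martingale term, you get $\E\sup e^{\beta s}|\hat X|^2\le 3e|x-y|^2+15e\,Ke\max(1,T^2)\,\Gamma$. The values before time $t$ (namely $x-y$, $\hat Y(t)$ and $0$) are dominated by the suprema and cost nothing extra.
\item Backward part: for $\sup|\hat Y|^2$, use the representation $\hat Y(s)=\E[\Delta g+\int_s^T\Delta f\,dr\,|\,\mathcal F_s]$ with Doob's $L^2$ inequality, as in \eqref{yz0}, rather than BDG, whose constant and accompanying Young splitting inflate the numbers. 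Together with the It\^o bound for $\hat Z$ this gives $\E\sup e^{\beta s}|\hat Y|^2+\E\int_t^T e^{\beta s}|\hat Z|^2ds\le 9Ke\,\E\sup e^{\beta s}|\hat X|^2+9Ke\max(1,T^2)\,\Gamma$.
\item The factor $e$ in the terminal term comes from $e^{-\beta r}\le e$, which you rightly keep. The paper's bookkeeping drops it (cf. \eqref{yz3}), so its reading $432=9\cdot 48$ should not simply be copied.
\item Closing: substituting the forward bound gives the absorption factor $Ke\max(1,T^2)(15e+9+135e^2K)$, while plain addition gives at most $Ke\max(1,T^2)(15e+18)$. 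The first is $\le(57+432K)Ke\max(1,T^2)$ for $K\le 0.012$, the second for $K\ge 0.005$. So under \eqref{l} one of them is $<1$ and the absorption does go through.
\end{itemize}
That verification is what must replace the definition of $\rho$ in your closing step.
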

\begin{proof}
Let set 
\begin{eqnarray*}
\bar{H}^{t, x,y,\varepsilon} &=& H^{t, x,\varepsilon} - H^{t, y,\varepsilon},\\
\bar{\varphi}(s) &=& \varphi(s, X_s^{t,x,\varepsilon},Y_s^{t,x,\varepsilon}, Z_s^{t,x,\varepsilon})- \varphi(s, X_s^{t,y,\varepsilon},Y_s^{t,y,\varepsilon},Z_s^{t,y,\varepsilon})
\end{eqnarray*}
for  $H= X, Y, Z$ and $\varphi:= b, f, \sigma, g$. It follows from FBSDE \eqref{Eq1} that
\begin{eqnarray*}
\left\{
\begin{array}{lll}
\bar{X}^{t, x,y,\varepsilon}(s)&	=&\displaystyle x-y + \int_t^s \bar{b}(r) dr + \int_t^s \bar{\sigma}(r) dW(r)	\\\\
\bar{Y}^{t, x,y,\varepsilon}(s)& = & \displaystyle\bar{g}(X_T^{ t}) + \int_s^T\bar{f}(r) dr - \int_s^T \bar{Z}^{t, x,y,\varepsilon}(r)dW(r),
\end{array}\right.
\end{eqnarray*}
so that applying Itô's formula, taking expectation with respect to $\mathcal{F}_t$, Cauchy-Schwartz inequality, isometry property and since $\varepsilon$ smaller then $1$, we get 

\begin{eqnarray}\label{x0}
\E \left[ \sup_{0 \leq s \leq T} e^{\beta s} | \bar{X}^{t, x,y,\varepsilon}(s) |^2 \right] &\leq & \left(1- \frac{3}{4}\beta^2 T^2\right)^{-1} 3|x- y|^2\nonumber \\ &&  +  \left(1- \frac{3}{4}\beta^2 T^2\right)^{-1} 6\max(1, T)\E \left[\int_0^T e^{\beta s}|\bar b (s)|^2  ds +  \int_0^T e^{\beta s}|\bar \sigma (s)|^2  ds \right]\nonumber\\
\end{eqnarray}	
and 
\begin{eqnarray}\label{yz0}
\E\left[\sup_{ t\leq s \leq T} e^{\beta s}|\bar Y^{t,x,\varepsilon} (s)|^2 + \int_t^T e^{\beta s}|\bar Z^{ x,y} (s) |^2 ds\right]&\leq&\E \left(9 e^{\beta T}|\bar g(T)|^2 +\left(8T +\frac{1}{\beta}\right)\int_t^T e^{\beta s} |\bar f(s)|^2 ds\right).\nonumber\\	
\end{eqnarray}	
Taking $\beta= \frac{1}{T} $ on the equations \eqref{x0}, \eqref{yz0} together with Assumptions $({\bf A1})$ and $({\bf A2})$ and \eqref{l} we get
\begin{eqnarray*}
\E\left[\sup_{ 0\leq s \leq T}e^{\beta s} |\bar{X}^{t,x,\varepsilon}(s)|^2 +  \sup_{ 0\leq s \leq T}e^{\beta s} |\bar{Y}^{t,x,\varepsilon}(s)|^2+ \int_t^T e^{\beta s} |\bar{Z}^{t,x,\varepsilon}(s)|^2 ds \right]& \leq & C|x-y|^2
\end{eqnarray*}
\end{proof}

\begin{lem}\label{l2}
Suppose that assumptions $({\bf A1})$--$({\bf A3})$ hold. Furthermore, assume that either the time horizon $T$ or the Lipschitz constant $K$ is sufficiently small so that
\begin{eqnarray}\label{c2}
(66+ 864K)K e \max(1,T^2) < 1.
\end{eqnarray}
Then there exists a positive constant $C$, independent of $\varepsilon$, such that
\begin{eqnarray*}
\E\left[ \sup_{ 0\leq s \leq T}e^{\beta s}|X^{t,x,\varepsilon}(s) |^2 + \sup_{ 0\leq s\leq T}e^{\beta s} |Y^{t,x,\varepsilon}(s) |^2  + \int_{ 0}^Te^{\beta s} |Z^{t,x,\varepsilon}(s) |^2ds\right]  \leq C \left(1+ |x|^2\right).   
\end{eqnarray*}	
\end{lem}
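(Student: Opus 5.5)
We follow the scheme of the proof of Lemma \ref{l1}, but compare the solution with the null trajectory instead of with a second solution. The inhomogeneous terms $\phi(s,{\bf 0})$ and $g(0)$ then play the role that $x-y$ played before. Write $(X,Y,Z)=(X^{t,x,\varepsilon},Y^{t,x,\varepsilon},Z^{t,x,\varepsilon})$, extended to negative times as in Remark \ref{R1}(a).

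For the forward component we use $\varepsilon\le 1$, the elementary inequality $(a+b+c)^2\le 3(a^2+b^2+c^2)$, Doob's inequality and the isometry. This gives, as in \eqref{x0},
\begin{eqnarray*}
\E\Big[\sup_{0\le s\le T}e^{\beta s}|X(s)|^2\Big]\le C_\beta\Big(|x|^2+\E\int_0^Te^{\beta s}\big(|b(s,X_s,Y_s,Z_s)|^2+|\sigma(s,X_s,Y_s)|^2\big)ds\Big).
\end{eqnarray*}
For the backward component we apply Itô's formula to $e^{\beta s}|Y(s)|^2$, then Young's inequality and the Burkholder--Davis--Gundy inequality, exactly as for \eqref{yz0}. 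This gives
\begin{eqnarray*}
\E\Big[\sup_{t\le s\le T}e^{\beta s}|Y(s)|^2+\int_t^Te^{\beta s}|Z(s)|^2ds\Big]\le \E\Big(9e^{\beta T}|g(X_T)|^2+\big(8T+\tfrac1\beta\big)\int_t^Te^{\beta s}|f(s,X_s,Y_s,Z_s)|^2ds\Big).
\end{eqnarray*}

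Next we remove the coefficients. By $({\bf A1})$--$({\bf A3})$,
\begin{eqnarray*}
|\phi(s,u_s)|^2\le 2|\phi(s,{\bf 0})|^2+2K\int_{-T}^0\|u(s+r)\|^2\alpha(dr),
\end{eqnarray*}
and similarly $|g(X_T)|^2\le 2|g(0)|^2+2K\int_{-T}^0|X(T+r)|^2\alpha(dr)$. The doubling here is why the constant in \eqref{c2} exceeds the one in \eqref{l}.

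The delay terms are handled with the standard Delong--Imkeller Fubini argument. We exchange the $ds$ and $\alpha(dr)$ integrals, shift $s\mapsto s+r$ using $e^{\beta s}\le e^{\beta(s+r)}e^{\beta T}$, and use that $\alpha$ is a probability measure. This gives
\begin{eqnarray*}
\int_0^Te^{\beta s}\int_{-T}^0\|u(s+r)\|^2\alpha(dr)ds\le e^{\beta T}\int_{-T}^Te^{\beta v}\|u(v)\|^2dv.
\end{eqnarray*}
The negative-time part only contributes $T(|x|^2+|Y(t)|^2)$, because of the extension convention in Remark \ref{R1}(a). The $\sup$-norms of $X$ and $Y$ then absorb $\int e^{\beta v}|X(v)|^2dv$ and $\int e^{\beta v}|Y(v)|^2dv$ at the cost of a factor $T$. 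The terminal term is bounded by $\sup_s e^{\beta s}|X(s)|^2$ after the same shift.

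Choosing $\beta=1/T$ makes $e^{\beta T}=e$. Adding the forward and backward estimates then gives an inequality of the form
\begin{eqnarray*}
\Phi\le C(1+|x|^2)+\kappa\,\Phi,\qquad \Phi:=\E\Big[\sup_s e^{\beta s}|X(s)|^2+\sup_s e^{\beta s}|Y(s)|^2+\int_0^Te^{\beta s}|Z(s)|^2ds\Big],
\end{eqnarray*}
with $\kappa=(66+864K)Ke\max(1,T^2)$. The inhomogeneous term $C(1+|x|^2)$ is controlled by $|x|^2$, $({\bf A1})$(iii), $({\bf A2})$(iii) and $|g(0)|$, all independent of $\varepsilon$. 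Condition \eqref{c2} says exactly $\kappa<1$, so $\Phi\le C(1+|x|^2)/(1-\kappa)$. The absorption is legitimate because $\Phi<\infty$: by Proposition \ref{Prop1} the solution lies in $\mathcal S^2\times\mathcal S^2\times\mathcal H^2$, and \eqref{c2} implies \eqref{c1}.

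The main obstacle is the constant bookkeeping in the full coupling. The forward estimate contains $Z$ through $b$ (and $Y$ through $b$ and $\sigma$), while the backward estimate contains $X$ through $g$ and $f$. Each cross term must therefore be tracked with its exact factor $K$ or $K^2$. The quadratic-in-$K$ term comes from inserting the $X$-bound into $g$ and $f$, and we must check that it totals $864K^2e\max(1,T^2)$. If the constants do not match exactly, we would first try chaining the estimates in the other order, bounding $(Y,Z)$ in terms of $X$ and then substituting into the forward bound.
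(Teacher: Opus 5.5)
Your argument is the paper's: the forward bound \eqref{x1} via Doob's inequality and the isometry with $\varepsilon\le 1$, the backward bound \eqref{yz} from It\^o's formula for $e^{\beta s}|Y(s)|^2$, the choice $\beta=1/T$, the delayed Lipschitz conditions handled by the Fubini shift (done explicitly in the proof of Lemma \ref{l4}), and then absorption. Your remark that $\Phi<\infty$, because \eqref{c2} implies \eqref{c1}, is a point the paper leaves implicit.

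The one step you leave open is that the absorbed constant is exactly $(66+864K)Ke\max(1,T^2)$. The paper does not verify this either; its proof in fact invokes \eqref{l} rather than \eqref{c2}. However, your factor-$2$ splitting works against you at this step. It replaces every $K$ in the homogeneous part of $\kappa$ by $2K$. If the absorption constant of Lemma \ref{l1} is $(57+432K)Ke\max(1,T^2)$, as \eqref{l} indicates, you get $(114+1728K)Ke\max(1,T^2)$, and \eqref{c2} does not force this below $1$. So the doubling is also not the right explanation of why \eqref{c2} is larger than \eqref{l}.

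Split instead with $|a+b|^2\le(1+\delta)|a|^2+(1+\delta^{-1})|b|^2$. Then $\kappa$ is the constant of Lemma \ref{l1} with $K$ replaced by $(1+\delta)K$. Since \eqref{c2} implies the strict inequality \eqref{l}, a small $\delta>0$ gives $\kappa<1$. The only cost is a larger, still $\varepsilon$-independent, constant in front of $1+|x|^2$.
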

\begin{proof}
Using arguments similar to those developed above, we derive the estimate
\begin{eqnarray}\label{x1}
\E\left[\sup_{ 0\leq s \leq T}e^{\beta s}	|X^{t,x,\varepsilon}(s)|^2 \right] 
&\leq &   3\left(1- \frac{3}{4}\beta^2 T^2\right)^{-1}|x|^3\\
&&+ 6\left(1- \frac{3}{4}\beta^2 T^2\right)^{-1}\max(1, T)\E \left[  \int_0^T e^{\beta s}|b (s)|^2  ds +  \int_0^T e^{\beta s}|\sigma (s)|^2  ds. \right]\nonumber
\end{eqnarray} 
Moreover, we obtain
\begin{eqnarray}\label{yz}
&&\E\left[\sup_{ 0\leq t \leq T} e^{\beta t}|Y^{t,x,\varepsilon} (t)|^2 + \int_0^T e^{\beta s}|Z^{t,x,\varepsilon} (s) |^2 ds\right]\nonumber\\ 
&\leq & \E \left(9 e^{\beta T}|g(X^{t,x,\varepsilon}_T)|^2+(8T +\frac{1}{\beta})\int_0^T e^{\beta s} |f(s)|^2 ds\right).
\end{eqnarray}
Finally, choosing $\beta= \dfrac{1}{T} $ in \eqref{x1} and \eqref{yz}, and combining these estimates with Assumptions  $({\bf A1})$ and $({\bf A2})$, and condition \eqref{l}, we conclude that there exists a positive constant $C$, independent of $\varepsilon $, such that
\begin{eqnarray*}
\E\left[\sup_{ 0\leq s \leq T}e^{\beta s} |X^{t,x,\varepsilon}(s)|^2 +  \sup_{ 0\leq s \leq T}e^{\beta s} |Y^{t,x,\varepsilon}(s)|^2+ \int_t^T e^{\beta s} |Z^{t,x,\varepsilon}(s)|^2 ds \right]& \leq & C(1+|x|)^2	.	
\end{eqnarray*}
\end{proof}

\begin{rem}
It is worth noting that if the terminal condition $g$ is bounded, then condition \eqref{c2} can be relaxed to
\begin{eqnarray*}
66 K e  \max(1, T^2)< 1.	
\end{eqnarray*}
\end{rem}
The following result provides a quantitative continuity estimate for the solution with respect to the initial time parameter. 
\begin{lem}\label{l3}
Suppose that assumptions $({\bf A1})$--$({\bf A3})$ hold. Furthermore, assume that the constants $T$ and $K$ are sufficiently small so that
\begin{eqnarray}\label{e3}
(57+432K)K e \max(1,T)<1.
\end{eqnarray}
Then there exists a positive constant $C$, independent of $\varepsilon$, such that for every $t,t'\in[0,T]$,
\begin{eqnarray*}
&&\E\left[\sup_{ t\leq s \leq T}e^{\beta s} |X^{t,x,\varepsilon}(s)- X^{t',x,\varepsilon}(s)|^2 +  \sup_{ t\leq s \leq T}e^{\beta s} |Y^{t,x,\varepsilon}(s)- Y^{t', x,\varepsilon}(s)|^2\right.\nonumber\\
&&\left.+ \int_t^T e^{\beta s} |Z^{t,x,\varepsilon}(s)- Z^{t', x,\varepsilon}(s)|^2 ds \right] \leq  C|t-t'|^2.
\end{eqnarray*}
\end{lem}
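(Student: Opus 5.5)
We may assume $t<t'$, the case $t=t'$ being trivial. The idea is to treat the pair $(t,t')$ as a perturbation of the initial condition and reduce to the mechanism of Lemma~\ref{l1}. Set $\bar H=H^{t,x,\varepsilon}-H^{t',x,\varepsilon}$ for $H=X,Y,Z$, with $\bar b,\bar\sigma,\bar f,\bar g$ defined as in the proof of Lemma~\ref{l1}. By Remark~\ref{R1}(a), both solutions are defined on $[0,T]$, with $X^{t',x,\varepsilon}\equiv x$ on $[0,t']$.

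For $s\in[t',T]$ the forward difference reads
\begin{eqnarray*}
\bar X(s)=\Delta+\int_{t'}^s\bar b(r)\,dr+\sqrt{\varepsilon}\int_{t'}^s\bar\sigma(r)\,dW(r),
\end{eqnarray*}
where
\begin{eqnarray*}
\Delta=\int_t^{t'}b(r,X^{t,x,\varepsilon}_r,Y^{t,x,\varepsilon}_r,Z^{t,x,\varepsilon}_r)\,dr+\sqrt{\varepsilon}\int_t^{t'}\sigma(r,X^{t,x,\varepsilon}_r,Y^{t,x,\varepsilon}_r)\,dW(r).
\end{eqnarray*}
On $[t,t']$ one has $\bar X(s)=X^{t,x,\varepsilon}(s)-x$, which is controlled by the same kind of integral. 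The backward difference is
\begin{eqnarray*}
\bar Y(s)=\bar g+\int_s^T\bar f\,dr-\int_s^T\bar Z\,dW+R,
\end{eqnarray*}
where the remainder $R$ collects the $f$- and $Z$-integrals over the window $[t,t']$.

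First, I would rerun the estimates \eqref{x0} and \eqref{yz0} verbatim, with $3|x-y|^2$ replaced by $3\E\sup_{t\le s\le t'}|\Delta_s|^2$ and with an extra $\E|R|^2$ on the backward side. Choosing $\beta=1/T$ and invoking the delayed Lipschitz conditions (A1)--(A3), condition \eqref{e3} lets me absorb the $\alpha$-averaged difference terms into the left-hand side. The presence of the delay measure $\alpha$ is harmless because of the extension convention. This gives
\begin{eqnarray*}
\mbox{LHS}\le C\left(\E\sup_{t\le s\le t'}|\Delta_s|^2+\E|R|^2\right).
\end{eqnarray*}

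Second, I would bound the window terms using Lemma~\ref{l2}. For the drift part, Cauchy--Schwarz together with the linear growth inherited from (A1) and Lemma~\ref{l2} gives
\begin{eqnarray*}
\E\Big|\int_t^{t'}b\,dr\Big|^2\le |t-t'|\int_t^{t'}\E|b|^2\,dr\le C(1+|x|^2)|t-t'|^2,
\end{eqnarray*}
and the $f$-part of $R$ is handled in the same way. To avoid a bare $\E\int_t^{t'}|Z|^2$ term, the $Z$-part of $R$ I would not estimate separately. Instead I would keep it inside the backward equation on $[t,T]$ and compare the two BSDEs there directly, so that $R$ only involves drift-type integrals.

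The main obstacle is the stochastic-integral contribution $\sqrt{\varepsilon}\int_t^{t'}\sigma\,dW$. Burkholder--Davis--Gundy together with boundedness of $\sigma$ from (A4)(i) yields only $\varepsilon C|t-t'|$, which is linear rather than quadratic in $|t-t'|$. What I would try first is to use (A4)(i) to write $\varepsilon C|t-t'|\le \varepsilon C^2 + |t-t'|^2$, but this introduces a constant term and does not yield a pure $C|t-t'|^2$ bound. I therefore expect the stated squared rate to hold cleanly only for the drift/deterministic part, in particular for the limiting system \eqref{u} and for the $\varepsilon$-independent pieces. For the diffusive piece I anticipate that one genuinely gets $C(|t-t'|^2+\varepsilon|t-t'|)$, and that the statement as worded would require either an additional argument I do not see or a weakening to this form. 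This is the step I would scrutinize before anything else.
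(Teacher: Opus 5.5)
Your suspicion is correct, and it can be made definitive: for $\varepsilon>0$ the lemma is false as stated, so there is no missing argument to find. Take $b=f=0$, $g=0$ and $\sigma\equiv 1$ on $[0,T]$ (and $0$ for negative times). These coefficients satisfy $({\bf A1})$--$({\bf A4})$ with any $K>0$, so \eqref{e3} holds once $K$ is small. Then $Y^{t,x,\varepsilon}=Z^{t,x,\varepsilon}=0$ and $X^{t,x,\varepsilon}(s)=x+\sqrt{\varepsilon}\,(W(s\vee t)-W(t))$. Hence for $t<t'$ and $s\ge t'$ one gets $X^{t,x,\varepsilon}(s)-X^{t',x,\varepsilon}(s)=\sqrt{\varepsilon}\,(W(t')-W(t))$. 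The left-hand side is therefore at least $\varepsilon|t'-t|$, which cannot be bounded by $C|t-t'|^2$ as $t'\to t$, for any fixed $\varepsilon>0$ and any $C$.

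The paper's proof uses the same splitting $[0,t]\cup[t,t']\cup[t',T]$ that you use, and it breaks exactly at the step you flagged. In the window estimate \eqref{t2t1}, the bound for $\E\sup_{t\le s\le t'}e^{\beta s}|X^{t,x,\varepsilon}(s)-x|^2$ contains $8\,\E\int_t^{t'}e^{\beta r}|\sigma(r)|^2dr$, with the factor $\varepsilon$ already discarded via $\varepsilon\le 1$. The paper then asserts $C(1+|x|)|t'-t|^2$ ``by Lemma~\ref{l2} and $\beta=1/(t_1-t_2)$''. Lemma~\ref{l2} makes this term at best $O(|t'-t|)$. Moreover, a weight $\beta$ tied to $|t'-t|$ is not admissible, since the rest of the proof uses $\beta=1/T$. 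Everything downstream in that proof is the Lemma~\ref{l1} mechanism you also propose; the error is confined to the rate of the window term.

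One correction to your own sketch: your claim that the drift window gives $C(1+|x|^2)|t-t'|^2$ via Lemma~\ref{l2} is also unjustified under $({\bf A1})$--$({\bf A3})$. It essentially needs $\int_t^{t'}\E|b(r,\cdot)|^2dr\le C|t'-t|$. However, $({\bf A1})$-(iii) makes $b(\cdot,0)$ only square integrable in time, and Lemma~\ref{l2} controls $Z$ only through $\E\int_0^T|Z|^2$. For example, take $\sigma\equiv 0$, $f=g=0$ and $b(r,\cdot)=|r-r_0|^{-1/4}$. Then $|X^{r_0,x}(s)-X^{r_0+h,x}(s)|^2=\tfrac{16}{9}h^{3/2}$ for $s\ge r_0+h$, so even the noiseless part violates the squared rate.

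What the argument actually yields is as follows:
\begin{itemize}
\item Under the lemma's hypotheses alone, only a modulus of continuity in $|t-t'|$.
\item With $b(\cdot,0)$, $f(\cdot,0)$, $\sigma(\cdot,0)$ bounded (for $\sigma$, e.g.\ via $({\bf A4})$-(i)) and $\varepsilon\le 1$, the bound $C|t-t'|$.
\item Your $C(|t-t'|^2+\varepsilon|t-t'|)$ only if, in addition, $\E|Z(r)|^2\le C\varepsilon$ pointwise, e.g.\ from $Z=\sqrt{\varepsilon}\,\partial_x u^\varepsilon\,\sigma$.
\end{itemize}
Any of these weakened forms suffices for the only later use of the lemma, namely the $\varepsilon$-uniform equicontinuity in time of $u^\varepsilon$.
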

\begin{proof}
For notational convenience, define 
\begin{eqnarray*}
\overline{H}^{t, t',x,\varepsilon} &=& H^{t, x,\varepsilon} - H^{t', x,\varepsilon}, \qquad H\in\{X,Y,Z\},
\end{eqnarray*}
\begin{eqnarray*}
\bar{\varphi}(s) &=& \varphi(s, X_s^{t,x,\varepsilon},Y_s^{t,x,\varepsilon}, Z_s^{t,x,\varepsilon})- \varphi(s, X_s^{t',x,\varepsilon},Y_s^{t',x,\varepsilon},Z_s^{t',x,\varepsilon}),
\end{eqnarray*}
and 
\begin{eqnarray*}
\psi(s)&=& \psi(s, X_s^{t, x,\varepsilon},Y_s^{ t, x;\varepsilon},Z_s^{t, x,\varepsilon})
\end{eqnarray*}
for $\varphi\in\{b,\sigma,f,g\}$ and $\psi\in\{b,\sigma\}$.
By decompose $e^{\frac{\beta}{2}s}|\overline{X}^{t,t',x,\varepsilon}(s)|$ according to the partition $[0,T]=[0,t]\cup[t,t']\cup[t',T]$, we get 
\begin{eqnarray*}
e^{\frac{\beta}{2}s}|\overline{X}^{t, t', x,\varepsilon}(s)| &\leq & e^{\frac{\beta}{2}s}|\overline{X}^{t, t', x,\varepsilon}(s)| {\bf 1}_{[0, t]}(s) + e^{\frac{\beta}{2}s}|\overline{X}^{t, t', x,\varepsilon}(s)| {\bf 1}_{[t, t']}(s)
			+  e^{\frac{\beta}{2}s}|\overline{X}^{t, t', x,\varepsilon}(s)| {\bf 1}_{[t', T]}(s).
\end{eqnarray*}
We first observe that
\begin{eqnarray*}
e^{\frac{\beta}{2}s}|\overline{X}^{t, t', x,\varepsilon}(s)|{\bf 1}_{[0, t]}(s)=0.
\end{eqnarray*}
Next, for $s\in[t,t']$,
\begin{eqnarray*}
 e^{\frac{\beta}{2}s}|\overline{X}^{t, t', x,\varepsilon}(s)| {\bf 1}_{[t, t']}(s)= e^{\frac{\beta}{2}s}| X^{t', x,\varepsilon}(s) - x|{\bf 1}_{[t, t']}(s)	
\end{eqnarray*}
so that with Itô's formula, expectation with respect $\mathcal{F}_{t}$, Hölder inequality  and isometry, we get
\begin{eqnarray*}
\E\left[\sup_{t \leq s \leq t'}e^{\beta s}|X^{t,x,\varepsilon}(s) - x|^2 \right] &\leq & 4 \left(1-\frac{\beta^2}{2}|t'-t|^2\right)^{-1} \E \left[|t'-t| \int_{t}^{t'}e^{\beta r}|b(r)|^2 dr+ 2 \int_{t}^{t'}e^{\beta r}|\sigma(r)|^2 dr\right].
\end{eqnarray*} 
By using the lemma \eqref{l2} and taking  $\beta= \frac{1}{t_1- t_2}$, we obtain
\begin{eqnarray} \label{t2t1}
\E\left[\sup_{t\leq s \leq t'}e^{\beta s}|X^{t,x,\varepsilon}(s) - x|^2 \right] &\leq & C(1+ |x|) |t'-t|^2. 
\end{eqnarray} 
Finally, for $s\in[t',T]$, with the same arguments we obtain	
\begin{eqnarray*}
e^{\frac{\beta}{2}s}\overline{X}^{t, t', x,\varepsilon}(s)&=& e^{\frac{\beta}{2}t'} [X^{t,x,\varepsilon}(t')- x] + \frac{\beta}{2}\int_{t'}^{s}	e^{\frac{\beta}{2}r}\overline{X}^{t, t', x,\varepsilon}(r)dr\\ 
&&+\int_{t'}^{s}	e^{\frac{\beta}{2}r}\bar{b}(r)dr +\int_{t'}^{s}	e^{\frac{\beta}{2}r}\bar{\sigma}(r) dW(r) .	
\end{eqnarray*}
and 	
\begin{eqnarray*}
\E\left[ \sup_{t' \leq s \leq T}	e^{\beta s} |\overline{X}^{t, t', x,\varepsilon}(s)|^2  \right] &\leq & 3(1-\frac{3}{4}\beta^2T^2 )^{-1} \E  [e^{\beta t'}| X^{t,x}(t')- x|^2 ]\\
&& + 6 (1-\frac{3}{4}\beta^2T^2 )^{-1} \E \left[T \int_{t'}^{s} e^{\beta r} |\bar{b}(r)|^2 dr + \int_{t'}^{s} e^{\beta r} |\bar{\sigma}(r)|^2 dr \right].
\end{eqnarray*}
Taking $\beta= \frac{1}{T}$ and the relation \eqref{t2t1} we have
\begin{eqnarray}\label{x13} 
\E\left[\sup_{t' \leq s \leq T}e^{\beta s}|\overline{X}^{t, t', x,\varepsilon}(s) |^2 \right] &\leq & 12 C(1+ |x|)|t'- t|^2 + 24  \max(1, T) \E \left[ \int_{0}^{T} e^{\beta r} |\bar{b}(r)|^2 dr + \int_{0}^{T} e^{\beta r} |\bar{\sigma}(r)|^2 dr \right].\nonumber\\
\end{eqnarray}

On the other hand, applying Itô's formula to $e^{\frac{\beta s}{2}} \overline{Y}^{t, t', x,\varepsilon}(s)$, we get
\begin{eqnarray*}
e^{\frac{\beta}{2}s}\overline{Y}^{t, t', x,\varepsilon}(s)&=&		e^{\frac{\beta}{2}T}\bar{g}(T) + \int_{s}^T 	e^{\frac{\beta}{2}r} \bar f(r) dr - \int_{s}^T e^{\frac{\beta}{2}r}\overline{Z}^{t, t', x,\varepsilon}(r) dW(r).  
\end{eqnarray*}
Taking expectation with respect $\mathcal{F}_s$ and Doob's inequality we have
\begin{eqnarray}\label{y3}
\E \left[\sup_{t \leq s \leq T} e^{\beta s}|\overline{Y}^{t, t', x,\varepsilon}(s)|^2 \right] \leq 8 \E \left[e^{\beta T}|\bar{g}(T)|^2 + T \int_0^T e^{\beta r } |\bar{f}(r) |^2 dr \right].
\end{eqnarray}
Applying again Itô's formula to $e^{\beta s}| \overline{Y}^{t, t', x,\varepsilon}(s)|^2$ together with Young's inequality and take the expectation we obtain
\begin{eqnarray*}
\E	\left[ e^{\beta s}|\overline{Y}^{t, t', x,\varepsilon}(s)|^2 + \int_{s}^{T} e^{\beta r}|\overline{Z}^{t, t', x,\varepsilon}(r)|^2 dr \right] \leq \E \left[ e^{\beta T} |\bar{g}(T)|^2 +  \frac{1}{\beta} \int_s^T e^{\beta r} | \bar{f}(r)|^2 dr\right].
\end{eqnarray*}	
Setting $s=0$ yields
\begin{eqnarray}\label{z3}
\E\left[  \int_{0}^{T} e^{\beta r}|\overline{Z}^{t, t', x,\varepsilon}(r)|^2 dr \right] \leq \E \left[ e^{\beta T} |\bar{g}(T)|^2 +  \frac{1}{\beta} \int_0^T e^{\beta r} | \bar{f}(r)|^2 dr\right].
\end{eqnarray}	
Combining \eqref{z3} and \eqref{y3} and taking $\beta= \frac{1}{T}$, we obtain
\begin{eqnarray}\label{yz3}
\E	\Big[\sup_{t \leq s \leq T} e^{\beta s}|\overline{Y}^{t, t', x,\varepsilon}(s)|^2  + \int_{0}^{T} e^{\beta r}|\overline{Z}^{t, t', x,\varepsilon}(r)|^2 dr\Big] &\leq & 9\E [e^{\beta T} \bar{g}(T) ]+ 9T \E  \left[\int_0^T e^{\beta r} | \bar{f}(r)|^2 dr\right] \nonumber \\
 & \leq & 9 K \E\left( \sup_{t_1 \leq s \leq T} e^{\beta s}|\overline{X}^{t, t', x,\varepsilon}(s) |^2\right)\nonumber\\
 &&+ 9 \max(1, T) \E\left[   \int_0^T e^{\beta r}|\bar{f}(r)|^2 dr\right]. 
\end{eqnarray}
In view of assumption $({\bf A1})$-$({\bf A3})$, we have
\begin{eqnarray*}
\E \left[\int_0^T e^{\beta r} | \bar{b}(r)|^2 dr\right] &\leq & Ke \E\left[\sup_{t \leq s \leq T} e^{\beta s}|\overline{X}^{t, t', x,\varepsilon}(s)|^2 + \sup_{t' \leq s \leq T} e^{\beta s}|\overline{Y}^{t, t', x,\varepsilon}(s) |^2\right], 
\end{eqnarray*}

\begin{eqnarray*}
\E\left[\int_0^T e^{\beta r} | \bar{\sigma}(r)|^2 dr\right] &\leq & K e \E \left[\sup_{t \leq s \leq T} e^{\beta s}|\overline{X}^{t, t', x,\varepsilon}(s)|^2 + \sup_{t \leq s \leq T} e^{\beta s}|\overline{Y}^{t, t', x,\varepsilon}(s) |^2\right]
\end{eqnarray*}
and 
\begin{eqnarray*}
\E\left[ \int_0^T e^{\beta r} | \bar{f}(r)|^2 dr\right] &\leq & K e \E \left[\sup_{t \leq s \leq T} e^{\beta s}|\overline{X}^{t, t', x,\varepsilon}(s) |^2 + \sup_{t \leq s \leq T} e^{\beta s}|\overline{Y}^{t, t', x,\varepsilon}(s)|^2 + \int_0^T e^{\beta s}|\overline{Z}^{t, t', x,\varepsilon}(s) |^2 ds\right].
\end{eqnarray*}
It follows from \eqref{t2t1}, \eqref{x13} and \eqref{yz3} that   
\begin{eqnarray*}
&&\E \left[\sup_{t \leq s \leq T} e^{\beta s}|\overline{X}^{t, t', x,\varepsilon}(s) |^2 + \sup_{t \leq s \leq T} e^{\beta s}|\overline{Y}^{t, t', x,\varepsilon}(s) |^2 + \int_0^T e^{\beta s}|\overline{Z}^{t, t', x,\varepsilon}(s) |^2 ds\right]\\
&\leq & 12 (1+ 9K) C(1+ |x|) |t' - t|^2\\
&& + (48K+432K^2)e \max(1, T)\E\left[\sup_{t \leq s \leq T} e^{\beta s}|\overline{X}^{t, t', x,\varepsilon}(s) |^2 + \sup_{t \leq s \leq T} e^{\beta s}|\overline{Y}^{t, t', x,\varepsilon}(s) |^2\right]\\
  &&+ 9K e \max (1, T) \E \left[\sup_{t \leq s \leq T} e^{\beta s}|\overline{X}^{t, t', x,\varepsilon}(s) |^2 + \sup_{t \leq s \leq T} e^{\beta s}|\overline{Y}^{t, t', x,\varepsilon}(s) |^2 + \int_0^T e^{\beta s}|\overline{Z}^{t, t', x,\varepsilon}(s) |^2 ds\right]\\
  &\leq &  12 (1+ 9K) C(1+ |x|) |t' - t|^2\\
  && +  	(57+ 432K)K e \max(1,T) \E \left[\sup_{t \leq s \leq T} e^{\beta s}|\overline{X}^{t, t', x,\varepsilon}(s) |^2 + \sup_{t \leq s \leq T} e^{\beta s}|\overline{Y}^{t, t', x,\varepsilon}(s) |^2 + \int_0^T e^{\beta s}|\overline{Z}^{t, t', x,\varepsilon}(s) |^2 ds\right]
  \end{eqnarray*}
Invoking assumption \eqref{e3} and absorbing the last term into the left-hand side, we deduce that 
\begin{eqnarray*}
\E \left[\sup_{t \leq s \leq T} e^{\beta s}|\overline{X}^{t, t', x,\varepsilon}(s) |^2 + \sup_{t \leq s \leq T} e^{\beta s}|\overline{Y}^{t, t', x,\varepsilon}(s) |^2 + \int_0^T e^{\beta s}|\overline{Z}^{t, t', x,\varepsilon}(s) |^2 ds\right] \leq C|t' - t|^2.
\end{eqnarray*}
This completes the proof.
\end{proof}
The following lemma provides a stability estimate with respect to the perturbation parameter $\varepsilon$.
\begin{lem}\label{l4}
Suppose that assumptions $({\bf A1})$-$({\bf A3})$ hold. Let For $0< \varepsilon < \varepsilon' < 1$ and and assume that the constants $T$ and $K$ are sufficiently small so that
\begin{eqnarray}\label{cc}
(33+ 144K)K e \max(1, T^2) < 1.
\end{eqnarray}
Then there exists a positive constant $C$, independent of $\varepsilon_1$, $\varepsilon_2$, and $T$, such that 
\begin{eqnarray*}
&&\E \left[\sup_{t \leq s\leq T} e^{\beta t} | X^{t,x,\varepsilon'} (s) - X^{t,x,\varepsilon}(s)|^2 + \sup_{t \leq s \leq T}e^{\beta t} |Y^{t,x,\varepsilon'} (s) - Y^{t,x,\varepsilon}(s)|^2\right.\\
&& \left.+ \int_0^Te^{\beta s}| Z^{t,x,\varepsilon'} (s) - Z^{t,x,\varepsilon}(s)|^2 ds\right]
\leq  C (\sqrt{\varepsilon'} - \sqrt{\varepsilon})^2 	
\end{eqnarray*}
\end{lem}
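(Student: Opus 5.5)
We follow the scheme of Lemmas \ref{l1} and \ref{l3}, comparing the two solutions driven by the perturbation parameters $\varepsilon$ and $\varepsilon'$ but started from the same initial data $(t,x)$. Set
\[
\bar H=H^{t,x,\varepsilon'}-H^{t,x,\varepsilon},\qquad H\in\{X,Y,Z\},
\]
and let $\bar\varphi$, $\varphi\in\{b,\sigma,f,g\}$, denote the difference of the coefficients evaluated along the two solutions. The only new feature is in the stochastic integral of the difference of the forward equations. There we write
\[
\sqrt{\varepsilon'}\,\sigma^{\varepsilon'}(r)-\sqrt{\varepsilon}\,\sigma^{\varepsilon}(r)=(\sqrt{\varepsilon'}-\sqrt{\varepsilon})\,\sigma^{\varepsilon'}(r)+\sqrt{\varepsilon}\,\bar\sigma(r),
\]
where $\sigma^{\varepsilon}(r)=\sigma(r,X^{t,x,\varepsilon}_r,Y^{t,x,\varepsilon}_r)$. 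Since the initial points coincide, the term $|x-y|^2$ of Lemma \ref{l1} disappears. It is replaced by this source term, of size $(\sqrt{\varepsilon'}-\sqrt{\varepsilon})^2$.

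For the forward component, we apply It\^o's formula to $e^{\frac{\beta}{2}s}\bar X(s)$ as in \eqref{x0}. We then use the Burkholder--Davis--Gundy (Doob) inequality, the isometry, and $\varepsilon<1$. This gives
\[
\E\Big[\sup_{t\le s\le T}e^{\beta s}|\bar X(s)|^2\Big]\le c_\beta\Big((\sqrt{\varepsilon'}-\sqrt{\varepsilon})^2\,\E\!\int_0^Te^{\beta r}|\sigma^{\varepsilon'}(r)|^2dr+\max(1,T)\,\E\!\int_0^Te^{\beta r}\big(|\bar b|^2+|\bar\sigma|^2\big)dr\Big).
\]
The first term is bounded by $C(\sqrt{\varepsilon'}-\sqrt{\varepsilon})^2$, with $C$ independent of $\varepsilon,\varepsilon'$. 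This follows from (A2)(i),(iii), which give linear growth $|\sigma(r,w)|^2\le 2|\sigma(r,0)|^2+2K\int\|w\|^2d\alpha$, combined with the a priori bound of Lemma \ref{l2}. If (A4)(i) is assumed, this term is simply bounded by $C^2T e^{\beta T}$. For the backward component, we reproduce \eqref{y3}--\eqref{yz3} verbatim: the backward equation does not involve $\varepsilon$ explicitly. This bounds $\sup e^{\beta s}|\bar Y|^2+\int e^{\beta s}|\bar Z|^2$ by $9K\,\E\sup e^{\beta s}|\bar X|^2+9\max(1,T)\,\E\int e^{\beta r}|\bar f|^2$, where (A3) is used for $\bar g$.

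Next, the delayed Lipschitz conditions (A1)(i), (A2)(i) and (A3) convert the integrals of $|\bar b|^2,|\bar\sigma|^2,|\bar f|^2$ into the norms of $(\bar X,\bar Y,\bar Z)$. This uses Fubini together with the extension by constants to negative times from Remark \ref{R1}. The shift $e^{\beta r}\le e^{\beta(r+u)}e^{\beta T}$ for $u\in[-T,0]$ produces the factor $Ke$ once $\beta=1/T$. Adding the forward and backward estimates yields
\[
\Theta\le C(\sqrt{\varepsilon'}-\sqrt{\varepsilon})^2+(33+144K)Ke\max(1,T^2)\,\Theta,
\]
where $\Theta$ is the left-hand side of the statement. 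Condition \eqref{cc} then lets us absorb the last term and conclude.

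The main obstacle is the bookkeeping of the constants, so that the absorption coefficient is exactly the one in \eqref{cc}. Two points need care. The forward estimate feeds into the backward one through the factor $9K$ coming from $\bar g$, which produces the quadratic term $144K^2$. Also, the coefficient $\bar\sigma$ now carries the factor $\varepsilon<1$, and we simply drop it. A secondary point is to keep the constant independent of $\varepsilon,\varepsilon'$. This requires using the uniform bound of Lemma \ref{l2}, or (A4)(i), rather than any $\varepsilon$-dependent estimate.
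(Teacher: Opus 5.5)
Your proposal follows the paper's proof essentially step for step: you split $\sqrt{\varepsilon'}\sigma^{\varepsilon'}-\sqrt{\varepsilon}\sigma^{\varepsilon}$ into a $\bar\sigma$ term plus a source term of size $(\sqrt{\varepsilon'}-\sqrt{\varepsilon})^2$, apply It\^o's formula to $e^{\beta s/2}\bar X$, use the standard backward estimate with $({\bf A3})$ for $\bar g$, convert via the delayed Lipschitz conditions with $\beta=1/T$, bound the source term uniformly in $\varepsilon$ via Lemma~\ref{l2}, and absorb under \eqref{cc}; the only cosmetic difference is that the paper evaluates the undifferenced $\sigma$ along the $\varepsilon$-solution rather than the $\varepsilon'$-solution, which is immaterial. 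Two loose points are shared with the paper. First, the exact absorption coefficient $(33+144K)$ is asserted rather than derived. Second, Lemma~\ref{l2} formally requires the stronger condition \eqref{c2}, which \eqref{cc} does not imply.
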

\begin{rem}
As a direct consequence of Lemma \ref{l4}, we have
\begin{eqnarray*}
&&\E\left[\sup_{t \leq s\leq T} e^{\beta t} | X^{t,x,\varepsilon'} (s) - X^{t,x,\varepsilon}(s)|^2 + \sup_{t \leq s \leq T}e^{\beta t} |Y^{t,x,\varepsilon'} (s) - Y^{t,x,\varepsilon}(s)|^2\right.\\
&& \left.+ \int_0^Te^{\beta s}| Z^{t,x,\varepsilon'} (s) - Z^{t,x,\varepsilon}(s)|^2 ds\right]\leq C\,|\varepsilon'-\varepsilon|.
\end{eqnarray*}
Hence, the family of solutions
$\{(X^{t,x,\varepsilon},Y^{t,x,\varepsilon},Z^{t,x,\varepsilon})\}_{\varepsilon>0}$
depends continuously on the perturbation parameter $\varepsilon$ in the corresponding weighted $L^2$-norm.
\end{rem}
\begin{proof}
Throughout the proof, we use the following notation: 
\begin{eqnarray*}
\overline{H}^{t,x,\varepsilon,\varepsilon'} &=& H^{t, x,\varepsilon'} - H^{t, x,\varepsilon}, \qquad H\in\{X,Y,Z\},
\end{eqnarray*}
\begin{eqnarray*}
\bar{\varphi}(s) &=& \varphi(s, X_s^{t,x,\varepsilon'},Y_s^{t,x,\varepsilon'}, Z_s^{t,x,\varepsilon'})- \varphi(s, X_s^{t,x,\varepsilon},Y_s^{t,x,\varepsilon},Z_s^{t,x,\varepsilon}),
\end{eqnarray*}
and 
\begin{eqnarray*}
\sigma(s)&=& \sigma(s, X_s^{t, x,\varepsilon},Y_s^{ t, x;\varepsilon},Z_s^{t, x,\varepsilon})
\end{eqnarray*}
for $\varphi\in\{b,\sigma,f,g\}$.

Next, for all $s\in [t,T]$, we have
\begin{eqnarray*}
\overline{X}^{t,x,\varepsilon,\varepsilon'}(s)& = & \int_t^s\bar{b}(s)ds + \sqrt{\varepsilon'}\int_t^s \bar{\sigma}(s)\,dW(s)+(\sqrt{\varepsilon'}- \sqrt{\varepsilon})\int_t^s  \sigma(s)\,dW(s).
\end{eqnarray*}
Applying Itô's formula to $e^{\frac{\beta}{2}s}   |	\overline{X}^{\epsilon}(s)|$ and then taking conditional expectation which respect $\mathcal{F}_s$  we have
\begin{eqnarray*}
			e^{\frac{\beta}{2}s}   |	\overline{X}^{t,x,\varepsilon,\varepsilon'}(s)| & \leq & \E \left[  \frac{\beta}{2} \int_t^T e^{\frac{\beta}{2}r}   |\overline{X}^{t,x,\varepsilon,\varepsilon'}(r)|dr + \int_t^T e^{\frac{\beta}{2}s}|  \bar{b}(r)| dr  + \sqrt{\epsilon'} \int_0^T \bar{\sigma}(r) dW(r)\right.\\
			 &&\left.+ (\sqrt{\epsilon'} - \sqrt{\epsilon}) \int_t^T \sigma_2(r)dW(r) | \mathcal{F}_s\right].
\end{eqnarray*}
Using Cauchy-Schwarz's inequality, Itô's isometry, and the fact that $\varepsilon'<1$,
we obtain
\begin{eqnarray*}
\E \left[ \sup_{t \leq s \leq T} e^{\beta t}|\overline{X}^{t,x,\varepsilon,\varepsilon'}(s)|^2 \right] & \leq &  \frac{\beta^2}{2}T    	\int_t^T\E( e^{\beta s}|\overline{X}^{t,x,\varepsilon,\varepsilon'}(s)|^2) ds\\ 
&& + 6 \E \left( T \int_t^T e^{\beta s} | \bar b(s)|^2 ds + \varepsilon'\int_t^T e^{\beta s} |\bar \sigma(s)|^2 ds    \right. \\
&& \left. + (\sqrt{\varepsilon'}- \sqrt{\varepsilon})^2\int_t^T e^{\beta s} | \sigma_2(s)|^2 ds \right)\\ 
& \leq & \frac{\beta^2}{2}T^2  \E [\sup_{t \leq s \leq T} e^{\beta t}|\overline{X}^{\epsilon}(t)|^2 ]\notag \\
&& + 6 \max( 1, T) \E \left[ \int_t^T e^{\beta s}| \bar b(s)|^2 ds + \int_0^T e^{\beta s}| \bar \sigma(s)|^2 ds  \right]\notag \\ 
&& + 6 (\sqrt{\varepsilon'}- \sqrt{\varepsilon})^2 \E \left( \int_0^T e^{\beta s} | \sigma_2(s)|^2 ds \right).
\end{eqnarray*}
Then
\begin{eqnarray}\label{x2}
\E \left[ \sup_{t \leq s \leq T} e^{\beta s}|\overline{X}^{t,x,\varepsilon,\varepsilon'}(s)|^2   \right] & \leq & 6\left( 1-\frac{\beta^2}{2}T^2   \right)^{-1}  \max( 1, T) \E \left[ \int_t^T e^{\beta s}| \bar b(s)|^2 ds + \int_t^T e^{\beta s}| \bar \sigma(s)|^2 ds  \right]\nonumber \\
&& + 6\left( 1-	\frac{\beta^2}{2}T^2   \right)^{-1} \Big(\sqrt{\epsilon'}- \sqrt{\varepsilon'}\Big)^2 \E \left( \int_t^T e^{\beta s} | \sigma_2(s)|^2 ds \right).
\end{eqnarray}
It follows from \eqref{x2} and \eqref{yz} respectively that
\begin{eqnarray}\label{zy}
&&\E \left[\sup_{0 \leq t \leq T} e^{\beta t} | \overline{Y}^{t,x,\varepsilon,\varepsilon'}(s)|^2 + \int_t^T e^{\beta s} |\overline{Z}^{t,x,\varepsilon,\varepsilon'}(s)|^2 ds \right]\nonumber\\
 &\leq & \E \left[9 e^{\beta T} | \overline g(\overline{X}^{t,x,\varepsilon,\varepsilon'}_T)|^2 + (8T+ \frac{1}{\beta})\int_0^T e^{\beta s} |\bar f (s)|^2 ds \right]\nonumber \\
&\leq & 108 K \left( 1-	\frac{\beta^2}{2}T^2\right)^{-1}  \max(1, T) \E \left[ \int_t^T e^{\beta s}| \bar b(s)|^2 ds + \int_0^T e^{\beta s}| \bar \sigma(s)|^2 ds  \right]\nonumber\\
&& + 108 K \left( 1-	\frac{\beta^2}{2}T^2   \right)^{-1} \Big(\sqrt{\epsilon'}- \sqrt{\epsilon}\Big)^2 \E \left[ \int_t^T e^{\beta s} | \sigma(s)|^2 ds\right.\nonumber\\
&&\left.+ (8T+ \frac{1}{\beta})\int_0^T e^{\beta s} |\bar f (s)|^2 ds \right]
\end{eqnarray}
Using assumptions $({\bf A1})$-$({\bf A3})$ and taking $\beta=\frac{1}{T}$,  we get 
\begin{eqnarray*}
\E\left(\int_0^T e^{\beta s} |\bar{\varphi}|^2 ds\right) &\leq &  K \E \left[\int_0^T e^{\beta s} \int_{-T}^0 ( |\bar X^\epsilon(s+ u)|^2 + |\bar Y^\epsilon(s+ u)|^2 +|\bar Z^\epsilon(s+ u)|^2   ) \alpha(du) ds \right] \nonumber\\
 &\leq & K e \max (1,T) \E \left[\sup_{t \leq s \leq T} e^{\beta s}|\overline X^{\epsilon}(s) |^2     + \sup_{t \leq s \leq T} e^{\beta s}|\overline Y^{\epsilon}(s) |^2   + \int_t^T  e^{\beta s}  |\overline Z^{\epsilon}(s)|^2 ds\right].
\end{eqnarray*}
On the other hand, using assumption ({\bf A1}) together with Lemma \ref{l2}, we obtain
\begin{eqnarray*}
\E \left[ \int_t^T e^{\beta s} | \sigma(s)|^2 ds\right]\leq C,	
\end{eqnarray*} 
where $C$ is independent of $\varepsilon, \varepsilon'$ and $T$.

Combining the above estimates, we arrive at
\begin{eqnarray*}
&&\E \left[\sup_{t \leq s \leq T} e^{\beta s} | \overline{X}^{t,x,\varepsilon,\varepsilon'}(s)|^2+\sup_{t \leq s \leq T} e^{\beta s} | \overline{Y}^{t,x,\varepsilon,\varepsilon'}(s)|^2 + \int_t^T e^{\beta s} |\overline{Z}^{t,x,\varepsilon,\varepsilon'}(s)|^2 ds \right]\\
&\leq & C\left(\sqrt{\epsilon'}- \sqrt{\epsilon}\right)^2\nonumber\\
&&+ (33+ 144K)K e \max(1,T^2) \E \left[\sup_{t \leq s \leq T} e^{\beta s} | \overline{Y}^{t,x,\varepsilon,\varepsilon'}(s)|^2+\sup_{t \leq s \leq T} e^{\beta s} | \overline{Y}^{t,x,\varepsilon,\varepsilon'}(s)|^2\right.\\
&&\left. + \int_t^T e^{\beta s} |\overline{Z}^{t,x,\varepsilon,\varepsilon'}(s)|^2 ds \right] .  
\end{eqnarray*} 
Invoking assumption \eqref{cc}, the second term can be absorbed into the left-hand side. Consequently,
\begin{eqnarray*}
\E \left[\sup_{t \leq s \leq T} e^{\beta s} | \overline{Y}^{t,x,\varepsilon,\varepsilon'}(s)|^2+\sup_{t \leq s \leq T} e^{\beta t} | \overline{Y}^{t,x,\varepsilon,\varepsilon'}(s)|^2 + \int_t^T e^{\beta s} |\overline{Z}^{t,x,\varepsilon,\varepsilon'}(s)|^2 ds \right]  \leq C  (\sqrt{\varepsilon'} - \sqrt{\varepsilon})^2.
\end{eqnarray*}
\end{proof}

\section{Asymptotic behavior for delayed FBSDE}

\subsection{Convergence of distributions}
Let recall, for $\varepsilon>0$ and $(t,x) \in [0, T]\times \R$, $(X^{t,x,\varepsilon},Y^{t,x,\varepsilon},Z^{t,x,\varepsilon}$ the solution of FBSDE \eqref{Eq1} and define 
 \begin{eqnarray*}
 \left\{
 \begin{array}{l}
 u^{\varepsilon}(t, {\bf 1}(x))= Y^{t,x,\varepsilon}(t),\\\\
 v^{\varepsilon}(t, {\bf 1}(x))= Z^{t,x,\varepsilon}(t),
 \end{array}
 \right.
 \end{eqnarray*}
 where ${\bf 1}(x):[-T,0]\rightarrow\R$ such that ${\bf 1}(x)(u)=x$ for all $u\in [-T,0]$. Then $u^{\varepsilon}$ satisfies the following path-dependent PDE 
\begin{eqnarray}\label{PDE}
\left\{
\begin{array}{l}
\displaystyle \frac{\partial u^{\varepsilon} (t,{\bf 1}(x))}{\partial t}+b(t,{\bf 1}(x),{\bf 1}(u^{\varepsilon}(t,{\bf 1}(x))),{\bf 1}(v^{\varepsilon}(t,x)))\frac{\partial u^{\varepsilon} (t,{\bf 1}(x))}{\partial x}\\\\
+\frac{1}{2}a(t,{\bf 1}(x),{\bf 1}(u^{\varepsilon}(t,{\bf 1}(x))))\frac{\partial^2 u^{\varepsilon} (t,{\bf 1}(x))}{\partial x^2}\displaystyle+f\left(t,{\bf 1}(x),{\bf 1}(u^{\varepsilon}(t,{\bf 1}(x))),{\bf 1}(v^{\varepsilon}(t,{\bf 1}(x)))\right)=0\\\\
u^{\varepsilon}(T,{\bf 1}(x))=g({\bf 1}(x)),
 \end{array}
 \right.
 \end{eqnarray}
 
Since FBSDE \eqref{Eq1} has a unique solution, the following Markov property holds. For any $s \in [t, T]$, we have

\begin{eqnarray*}
\left\{
\begin{array}{l}
u^{\varepsilon}(s, X^{t, x,\varepsilon}_s)= Y^{s,X^{t, x, \varepsilon}_s}(s)= Y^{t,x,\varepsilon}(s)\\\\
 v^{\varepsilon}(s, X^{t, x, \varepsilon}_s)= Z^{s,X^{t, x, \varepsilon}_s}(s)= Z^{t,x,\varepsilon}(s),
 \end{array}
 \right.
 \end{eqnarray*}
 and it not difficult to to check that $(X^{t,x,\varepsilon},Y^{t,x,\varepsilon},Z^{t,x,\varepsilon})$ solves the following
decoupled FBSDEs
\begin{eqnarray}\label{Eq11}
\left\{
\begin{array}{lll}
\displaystyle	X^{t,x,\varepsilon}(s)&=& \displaystyle x+\int^{s\vee t}_{t} b(r,X_r^{t,x,\varepsilon},u^{\varepsilon}(r,X_r^{t,x,\varepsilon}),v^{\varepsilon}(r,X_r^{t,x,\varepsilon}))dr\displaystyle+\sqrt{\varepsilon}\int^{s\vee t}_t \sigma(r,X_r^{t,x,\varepsilon},u^{\varepsilon}(r,X_r^{t,x,\varepsilon}))dW(r) \\\\
\displaystyle	Y^{t,x,\varepsilon}(s)&=& \displaystyle g(X_T^{t,x,\varepsilon}) + \int_{s\vee t}^T f(r,X_r^{t,x,\varepsilon},u^{\varepsilon}(r,X_r^{t,x,\varepsilon}),v^{\varepsilon}(r,X_r^{t,x,\varepsilon}))dr - \int_{s\vee t}^T Z^{t,x,\varepsilon}(r) d W(r),\;\; , s\in [0,T].	
	\end{array}
	\right.
	\end{eqnarray}
\begin{rem}\label{R1}
Under assumptions $({\bf A3})$, one can prove like in Proposition 2.4 and Proposition B.6 of \cite{Delarue1} and Theorem 2.9 of \cite{Delarue2}, that there exist two constants $\kappa,\kappa_1$ only depending on $K, C, n, T$ (independent of $\varepsilon$) such that:
\begin{eqnarray}
|u^{\varepsilon}(t,{\bf 1}(x))|\leq \kappa,\label{Bound1}\\\nonumber\\
u^{\varepsilon}\in C^{1,2}_{b}([0,T]\times L_{-T}(\R))\nonumber\\\nonumber\\
\sup_{(t,x)\in [0,T]\times\R}\left|\frac{\partial u (t,{\bf 1}(x))}{\partial x}\right|\leq \kappa_1,\nonumber
\end{eqnarray}
and 
\begin{eqnarray*}
	Z^{t,x,\varepsilon}(s)=v^{\varepsilon}(s,X^{t,x,\varepsilon}_s)= \sqrt{\varepsilon}\nabla_{x}u^{\varepsilon}(s, X^{t,x,\varepsilon}_s)\sigma(s, X^{s,x,\varepsilon}_s, Y^{t,x,\varepsilon}_s).
\end{eqnarray*}
\end{rem}

From now on, we are concerned on the behavior laws of when $\varepsilon \rightarrow 0$.
\begin{theo}
Assume that conditions $({\bf A1})$-$({\bf A3})$ are satisfied.
\begin{description}
\item[(i)] For all $\delta > 0$
\begin{eqnarray*}
\lim_{\epsilon \rightarrow 0} \P\left(  \sup_{0\leq t \leq T} |X^\epsilon (t) - \mathcal{X} (t)| > \delta  \right)= 0. 	
\end{eqnarray*}
\item[(ii)] For $A\in \R$,  let define $\ Q^{\varepsilon}(A)=\P\left( Y^\varepsilon(.)^{-1}(A)\right)$. Then there exists a sub-sequence $Q^{\epsilon_n}$ which converges to $Q$  weakly in the Meyer-Zheng topology.
\end{description}
\end{theo}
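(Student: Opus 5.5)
The approach has two parts: a uniform moment bound controlling the forward component, and a tightness criterion in the Meyer--Zheng topology for the backward component.

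\emph{Part (i).} The plan is to prove the stronger statement that $\E[\sup_{0\leq s\leq T}|X^{\varepsilon}(s)-\mathcal{X}(s)|^2]\to 0$, and then obtain (i) from Chebyshev's inequality. Write $\widehat{H}^\varepsilon=H^\varepsilon-\mathcal{H}$ for $H\in\{X,Y\}$, with $t=0$ fixed. Subtracting \eqref{u} from \eqref{Eq1} gives
\begin{eqnarray*}
\widehat{X}^\varepsilon(s)&=&\int_0^s\big(b(r,X^\varepsilon_r,Y^\varepsilon_r,Z^\varepsilon_r)-b(r,\mathcal{X}_r,\mathcal{Y}_r,0)\big)dr+\sqrt{\varepsilon}\int_0^s\sigma(r,X^\varepsilon_r,Y^\varepsilon_r)dW(r),\\
\widehat{Y}^\varepsilon(s)&=&g(X^\varepsilon_T)-g(\mathcal{X}_T)+\int_s^T\big(f(r,X^\varepsilon_r,Y^\varepsilon_r,Z^\varepsilon_r)-f(r,\mathcal{X}_r,\mathcal{Y}_r,0)\big)dr-\int_s^TZ^\varepsilon(r)dW(r).
\end{eqnarray*}
The $Z$-component of the deterministic system is $0$, so $Z^\varepsilon$ itself plays the role of the difference $\widehat Z^\varepsilon$.

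This is exactly the structure of Lemma \ref{l4}, taking $\varepsilon'=\varepsilon$ there and the "$\varepsilon=0$" solution to be the deterministic pair $(\mathcal{X},\mathcal{Y},0)$, which solves \eqref{Eq1} with $\varepsilon=0$. I would therefore repeat its argument. Apply It\^o's formula to $e^{\beta s}|\widehat X^\varepsilon(s)|^2$ and to $e^{\beta s}|\widehat Y^\varepsilon(s)|^2$, then use Doob's inequality and the delayed Lipschitz conditions (A1)--(A3) with $\beta=1/T$. The cross-term $\varepsilon\,\E\int_0^Te^{\beta s}|\sigma(s,X^\varepsilon_s,Y^\varepsilon_s)|^2ds$ is bounded by $C\varepsilon$: this follows from Lemma \ref{l2} and the Lipschitz bound on $\sigma$, or directly from the boundedness in (A4)(i). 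Under the smallness condition \eqref{cc}, the Lipschitz terms are absorbed into the left-hand side, yielding
\[\E\Big[\sup_s e^{\beta s}|\widehat X^\varepsilon(s)|^2+\sup_s e^{\beta s}|\widehat Y^\varepsilon(s)|^2+\int_0^Te^{\beta s}|Z^\varepsilon(s)|^2ds\Big]\leq C\varepsilon .\]
Alternatively, this bound can be obtained from Lemma \ref{l4} by letting $\varepsilon\downarrow0$ there, using Cauchy completeness and uniqueness of the limit system. On negative times both processes equal $x$ by Remark \ref{R1}(a), so the supremum over $[0,T]$ is controlled. Then $\P(\sup|\widehat X^\varepsilon|>\delta)\leq C\varepsilon/\delta^2\to0$, which proves (i).

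\emph{Part (ii).} Here I would use the Meyer--Zheng tightness criterion: a family of quasimartingales $Y^\varepsilon$ on $[0,T]$ has laws that are tight in the pseudo-path (Meyer--Zheng) topology provided
\[\sup_\varepsilon\Big(\E\big[\sup_s|Y^\varepsilon(s)|\big]+\mathrm{CV}_T(Y^\varepsilon)\Big)<\infty,\qquad \mathrm{CV}_T(Y^\varepsilon)=\E\int_0^T|f(r,X^\varepsilon_r,Y^\varepsilon_r,Z^\varepsilon_r)|dr.\]
The first term is bounded by Lemma \ref{l2}, uniformly in $\varepsilon$. For the conditional variation, combine the Lipschitz property (A1)(i) with (A1)(iii) to get
\[\E\int_0^T|f(r,\cdot)|dr\leq C\Big(1+\E\int_0^T\!\!\int_{-T}^0\big(|X^\varepsilon(r+u)|^2+|Y^\varepsilon(r+u)|^2+|Z^\varepsilon(r+u)|^2\big)\alpha(du)\,dr\Big)^{1/2}.\]
A Fubini argument with the weight $e^{\beta s}$, as in the proof of Lemma \ref{l4}, reduces the right-hand side to the quantities in Lemma \ref{l2}, giving the bound $C(1+|x|^2)$ uniformly in $\varepsilon$. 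Tightness then follows, and Prokhorov's theorem on the Polish space of Meyer--Zheng paths yields a subsequence $Q^{\varepsilon_n}$ converging weakly to some $Q$. Using the estimate from part (i), one can in fact identify $Q=\delta_{\mathcal{Y}}$, but the statement only asks for subsequential convergence.

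\textbf{Main obstacle.} The delicate step is the absorption argument in (i). It requires checking that the constants arising from the delay, the factor $e\max(1,T^2)$, match those in \eqref{cc}. It also requires handling the terminal term $g(X^\varepsilon_T)-g(\mathcal{X}_T)$, which couples the $\widehat Y$-estimate back to $\sup|\widehat X|$. I would follow the constant bookkeeping of Lemma \ref{l4} verbatim, so this step should go through under condition \eqref{cc}.
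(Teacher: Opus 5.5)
Your proof is correct, under the same kind of implicit smallness assumptions the paper itself needs. Your absorption step uses \eqref{cc}, and your uniform bounds use Lemma \ref{l2} under \eqref{c2}. The paper's proof likewise invokes Lemma \ref{l1} under \eqref{l} and Delarue-type properties of the decoupling field $u^\varepsilon, v^\varepsilon$. None of these follow from (A1)--(A3) alone.

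Part (ii) follows the paper's argument. $Y^\varepsilon$ is a quasimartingale with conditional variation at most $\E\int_0^T|f|\,dr$, bounded uniformly in $\varepsilon$, and Meyer--Zheng compactness then gives a convergent subsequence. You actually supply the bound that the paper only asserts. One small correction: $D([0,T];\R)$ with the Meyer--Zheng topology is a Lusin space, not a Polish one. So the fact that limit laws are carried by $D$ must come from the Meyer--Zheng theorem itself, not from Prokhorov; this is what the paper's compact-embedding step handles.

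Part (i) is where your route genuinely differs. The paper goes through the decoupling field: it rewrites the forward equation as the SDE \eqref{Eq11} with drift $b^\varepsilon=b(\cdot,u^\varepsilon,v^\varepsilon)$ and asserts $b^\varepsilon\to b^0$ uniformly. It then uses Chebyshev for the drift discrepancy and Burkholder--Davis--Gundy for $\sqrt{\varepsilon}\int\sigma\,dW$. You instead compare the coupled system directly with $(\mathcal{X},\mathcal{Y},0)$ through a Lemma \ref{l4}-type estimate, which is essentially what the paper does only in its next theorem.

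What each route buys:
\begin{itemize}
\item Your route is self-contained and yields the explicit rate $C\varepsilon$ in $\mathcal{S}^2\times\mathcal{S}^2\times\mathcal{H}^2$. It also avoids the convergence $u^\varepsilon\to u^0$, $v^\varepsilon\to 0$, which the paper uses here without proof and only establishes later, and then only on compacts.
\item The paper's route gives the decoupled small-noise SDE picture, which it reuses for the large deviation principle via the contraction principle.
\end{itemize}

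Finally, as you observe, your estimate also gives $\E\sup_s|Y^\varepsilon(s)-\mathcal{Y}(s)|^2\le C\varepsilon$. Hence $Q^\varepsilon$ converges to $\delta_{\mathcal{Y}}$ along the whole family, which is stronger than the subsequential statement in (ii).
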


\begin{proof}
Let set 
\begin{eqnarray}\label{N1}
\left\{
\begin{array}{l}
b^{\varepsilon}(s, {\bf 1}(x))= b(t,{\bf 1}(x),{\bf 1}(u^{\varepsilon}(t,{\bf 1}(x))),{\bf 1}(v^{\varepsilon}(t,x)))\\\\
\sigma^{\varepsilon}(s, {\bf 1}(x))= \sigma(t,{\bf 1}(x),{\bf 1}(u^{\varepsilon}(t,{\bf 1}(x))),{\bf 1}(v^{\varepsilon}(t,x)))\\\\
f^{\varepsilon}(s, {\bf 1}(x))= f(t,{\bf 1}(x),{\bf 1}(u^{\varepsilon}(t,{\bf 1}(x))),{\bf 1}(v^{\varepsilon}(t,x))).
 \end{array}
 \right.
 \end{eqnarray}
From the relation between $u^{\varepsilon}$ and $Y^{t,x,\varepsilon}$, between $v^{\varepsilon}$ and $Z^{t,x,\varepsilon}$, and between $u^{0}$ and $\mathcal{Y}^{t,x,\varepsilon}$, respectively, we have $\phi^{\varepsilon}$ converges uniformly to $\phi^{0}$ defined by
\begin{eqnarray*}
\sigma^0(s, {\bf 1}(x))= \sigma(s,{\bf 1}(x),{\bf 1}(u^{0}(s,{\bf 1}(x))))
\phi^{0}(s, {\bf 1}(x))= \phi(s,{\bf 1}(x),{\bf 1}(u^{0}(s,{\bf 1}(x))),0), 
 \end{eqnarray*}
with $\phi=b,\, f$.

It follows from the definition of $X^{t,x,\varepsilon}$ and $\mathcal{X}^{t,x}$ with respect to equation \eqref{Eq11}, that
\begin{eqnarray*}
|X^{t,x,\varepsilon}(s) - \mathcal{X}^{t,x}(s)|&\leq & \left| \int_t^s (b^{\varepsilon}(r,X^{t,x,\varepsilon}_r)- b^{0}(r,\mathcal{X}^{t,x}_r))dr \right| + \sqrt{\varepsilon} \sup_{ t\leq s \leq T} \left|\int_t^s\sigma(r, X_r^{t,x,\varepsilon})dW(r)\right| \\
 &\leq & \sup_{ t\leq s \leq T} \left| \int_t^s b^{\varepsilon}(r,X^{t,x,\varepsilon}_r)- b^{0}(r,\mathcal{X}^{t,x}_r))dr\right| + \sqrt{\varepsilon} \sup_{ t\leq s \leq T}\left|\int_t^s\sigma(r, X_r^{t,x,\varepsilon})dW(r)\right|.
\end{eqnarray*}
Thus, by passing to the probability, we obtain
\begin{eqnarray}\label{Q1}
\P(\sup_{ t\leq s \leq T} | X^{t,x,\varepsilon}(s) - \mathcal{X}^{t,x}(s)|> \delta )& \leq &\P\left(\left| \int_t^T (b^{\varepsilon}(r,X^{t,x,\varepsilon}_r)- b^{0}(r,\mathcal{X}^{t,x}_r))dr\right| > \frac{\delta}{2} \right) \nonumber\\
 &&+ \P\left(  \sup_{ t\leq s \leq T}\left|\sqrt{\varepsilon}\int_t^s\sigma(r, X_r^{t,x,\varepsilon})dW(r)\right|>\frac{\delta}{2} \right).
\end{eqnarray}
From Chebyshev's inequality and the first assertion of lemma \eqref{l1} we have
\begin{eqnarray}\label{Q2}
\P\left( \left| \int_t^T (b^{\varepsilon}(r,X^{t,x,\varepsilon}_r)- b^{0}(r,\mathcal{X}^{t,x}_r))dr \right| > \frac{\delta}{2} \right) &\leq & \frac{4}{\delta^2} \E\left[\left| \int_t^T (b^{\varepsilon}(r,X^{t,x,\varepsilon}_r)- b^{0}(r,\mathcal{X}^{t,x}_r))dr \right|^2 \right]\nonumber\\
&\leq & \frac{4T}{\delta^2} \E\int_t^T |b^{\varepsilon}(r,X^{t,x,\varepsilon}_r)- b^{0}(r,\mathcal{X}^{t,x}_r)|^2ds \nonumber\\
&\leq & C\sqrt{\varepsilon}.
	\end{eqnarray}
For estimation of the second term in \eqref{Q1}, we use Chebyshev's inequality both Burkholder-Davis-Gundy and Lemma \eqref{l1} we have
\begin{eqnarray}\label{Q3}
\P\left(  \sup_{ t\leq s \leq T}\left|\sqrt{\varepsilon}\int_t^s\sigma(r, X_r^{t,x,\varepsilon})dW(r)\right|>\frac{\delta}{2} \right)&\leq & 4\varepsilon\E\left(\int_t^T|\sigma(r, X_r^{t,x,\varepsilon})|^2dr\right)\nonumber\\
&\leq & C\varepsilon.
\end{eqnarray}
Finally putting \eqref{Q1} and \eqref{Q2} in  \eqref{Q1} and letting $\varepsilon$ to $+\infty$ we derive $(i)$.
	
Concerning $(ii)$, the argument is similar to that in \cite{Cal}. Thus, we first
show that the solution of the FBSDE \eqref{Eq11} is a quasimartingale.
Indeed, given a subdivision $\pi:0=t_0<\cdots<t_n=T$, and setting
\begin{eqnarray*}
 V^{\pi}_{T}(Y^{t,x,\varepsilon})=\sum_{i=0}^{n-1}\E(|	Y^{t,x,\varepsilon}(t_{i+1})-	Y^{t,x,\varepsilon}(t_{i})|\mathcal{F}_{t_i}),
\end{eqnarray*}
one can derive that
\begin{eqnarray*}
V^{\pi}_{T}(Y^{t,x,\varepsilon})\leq C.
\end{eqnarray*}
 Noting that $V_T(Y^{t,x,\varepsilon}) = \sup_{\pi}V^{\pi}_{T}(Y^{t,x,\varepsilon}) < +\infty$, the result follows.	
Since $D(\R)= D([0,T]; R)$, the Skorohod space of right continuous with
left limits functions $\varphi$ on $[0, T]$ with values in $\R$ such that
\begin{eqnarray*}
\psi(T-)=\lim_{t\rightarrow T}\psi(t)=\psi(T),
\end{eqnarray*}
with by convention $\psi(0-) = 0$, is a Lusin space, there exist a compact metric space $K$ such that $D(R^n)$ is a Borel set in $K$. Next, the set of probability $\widetilde{\Q}^{\varepsilon}$ on  $K$ defined by 
\begin{eqnarray*}
\widetilde{\Q}^{\varepsilon}(A)=\Q^{\varepsilon}(A\cap D(\R^n)).
\end{eqnarray*}
for all $A\in \mathcal{B}(K)$, is compact for the weak convergence. Hence there exist a subsequence of $\widetilde{\Q}^{\varepsilon}$ denoted $\widetilde{\Q}^{\varepsilon_n}$ and a probability measure $\widetilde{\Q}$ on $K$ such that
\begin{eqnarray*}
	\widetilde{\Q}^{\varepsilon_n}  \stackrel{w}{\rightarrow} \widetilde{\Q}.
\end{eqnarray*}
On the other hand, in view of it definition, $\widetilde{\Q}^{\varepsilon}$ is the distribution of $Y^{t,x,\varepsilon}$ considered as random variable on $(K,\mathcal{B}(K))$. Furthermore, by Proposition 3.1 in \cite{Cal}, one has possibly along a subsequence, $\widetilde{\Q}^{\varepsilon_n}$  converges weakly to a probability law $\Q^{*}\in \mathcal{M}(D(\R))$. The uniqueness of the weak limit implies that  
\begin{eqnarray*}
	\Q^{*}(A)=\widetilde{\Q}(A), \;\;\; \forall\;\; A\in \mathcal{B}(D(\R)).
\end{eqnarray*}
In particular
\begin{eqnarray*}
	1=\Q^{*}(D(\R))=\widetilde{\Q}(D(\R)),
\end{eqnarray*}
which end the proof.
\end{proof}

\subsection{Convergence almost surely}

\begin{theo}
Assume that conditions $({\bf A1})$-$({\bf A3})$ are satisfied. We have 
\begin{enumerate}
\item For each $s, t \in  [0, T],\; t \leq s$, $(X^{t,x,\varepsilon},Y^{t,x,\varepsilon},Z^{t,x,\varepsilon})$ the solution of \eqref{Eq1} converges in $\mathcal{S}^2(\R)\times\mathcal{S}^2(\R)\times\mathcal{H}^2(\R^n)$ when $\varepsilon\rightarrow 0$, to $(\mathcal{X}^{t,x},\mathcal{Y}^{t,x},0)$ solution of \eqref{u}.
\item Let denote $Y^{t,x}(t)$ the limit of $Y^{\varepsilon,t,x}(t)$ when $\varepsilon \rightarrow 0$ and set $u(t,{\bf 1}(x))=Y^{t,x}(t)$. Then the function $u$ is a viscosity solution of PDE the following path-dependent PDE 
\begin{eqnarray}\label{u1}
\left\{
\begin{array}{l}
\displaystyle \frac{\partial u (t,{\bf 1}(x))}{\partial t}+b(t,{\bf 1}(x),{\bf 1}(u(t,{\bf 1}(x))),{\bf 1}(0))\frac{\partial u(t,{\bf 1}(x))}{\partial x}\\\\\displaystyle+f\left(t,{\bf 1}(x),{\bf 1}(u(t,{\bf 1}(x))),{\bf 1}(0)\right)=0\\\\
u(T,{\bf 1}(x))=g({\bf 1}(x)),
 \end{array}
 \right.
 \end{eqnarray}
\item The function $u$ is bounded, continuous Lipschitz in $\varphi$ and uniformly continuous in time.
\item Furthermore, if $u\in C^{1,1}_{b}([0, T]\times  L_{-T}^{\infty}(\mathbb{R}))$, since \eqref{u} has a unique continuous solution, the function $u$ is a classical solution of \eqref{u1}.
\end{enumerate}
\end{theo}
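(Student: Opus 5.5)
The plan is to obtain item 1 by a direct stability estimate comparing \eqref{Eq1} with \eqref{u}, in the spirit of Lemma \ref{l4}, and then derive items 2--4 from it. First I regard $(\mathcal{X}^{t,x},\mathcal{Y}^{t,x},0)$ as the solution of \eqref{Eq1} with $\varepsilon=0$. Since it is deterministic, the backward equation holds with zero martingale part. Set $\overline{X}=X^{t,x,\varepsilon}-\mathcal{X}^{t,x}$, $\overline{Y}=Y^{t,x,\varepsilon}-\mathcal{Y}^{t,x}$ and $\overline{Z}=Z^{t,x,\varepsilon}$. Then
\begin{eqnarray*}
\overline{X}(s)=\int_t^s\bar b(r)dr+\sqrt{\varepsilon}\int_t^s\sigma(r,X^{t,x,\varepsilon}_r,Y^{t,x,\varepsilon}_r)dW(r),\qquad \overline{Y}(s)=\bar g+\int_s^T\bar f(r)dr-\int_s^T\overline{Z}(r)dW(r).
\end{eqnarray*}
I would repeat the estimates \eqref{x2} and \eqref{zy} with $\beta=1/T$. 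The extra stochastic integral is handled by Doob's inequality and Lemma \ref{l2}, which give $\varepsilon\,\E\int_t^Te^{\beta r}|\sigma(r)|^2dr\leq C\varepsilon$. The delayed Lipschitz conditions (A1)--(A3), together with the extension of Remark \ref{R1}(a) to negative times, bound $\E\int e^{\beta s}|\bar\varphi|^2$ by $Ke\max(1,T)$ times the full norm of $(\overline X,\overline Y,\overline Z)$. Under the smallness condition \eqref{cc}, the $K$-terms are absorbed into the left-hand side. This yields
\begin{eqnarray*}
\E\Big[\sup_{t\le s\le T}e^{\beta s}|\overline X(s)|^2+\sup_{t\le s\le T}e^{\beta s}|\overline Y(s)|^2+\int_t^Te^{\beta s}|\overline Z(s)|^2ds\Big]\le C\varepsilon,
\end{eqnarray*}
which is the convergence in $\mathcal S^2\times\mathcal S^2\times\mathcal H^2$. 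An alternative route is to note that Lemma \ref{l4} makes $\varepsilon\mapsto(X^\varepsilon,Y^\varepsilon,Z^\varepsilon)$ Cauchy as $\varepsilon\downarrow0$, and then to identify the limit by passing to the limit in \eqref{Eq1}. The limit's $Z$ vanishes because the limit $Y$ is deterministic, $Y^{t,x,\varepsilon}(t)$ being $\mathcal F_t$-trivial. For the ``almost sure'' label, I would extract a subsequence $\varepsilon_n=n^{-2}$ and apply Borel--Cantelli to $\sum C\varepsilon_n<\infty$.

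Item 3 follows by passing to the limit in the uniform-in-$\varepsilon$ bounds. Boundedness comes from \eqref{Bound1}. The Lipschitz property in $x$ follows from Lemma \ref{l1}, since $|u^\varepsilon(t,\mathbf 1(x))-u^\varepsilon(t,\mathbf 1(y))|^2\le C|x-y|^2$ with $C$ independent of $\varepsilon$, and the limit $u=\lim u^\varepsilon$ inherits it. For the time variable, I would write
\begin{eqnarray*}
u(t,\mathbf 1(x))-u(t',\mathbf 1(x))=[Y^{t,x}(t)-Y^{t',x}(t)]+[Y^{t',x}(t)-Y^{t',x}(t')].
\end{eqnarray*}
Lemma \ref{l3} controls the first bracket. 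The deterministic backward equation, with $f$ bounded on bounded sets thanks to Lemma \ref{l2}, controls the second by $C|t-t'|$. Item 4 is then the standard observation that a $C^{1,1}_b$ viscosity solution is classical, because test functions can be taken equal to $u$ itself. Uniqueness of the continuous solution of \eqref{u} guarantees that $u(s,\mathcal X^{t,x}_s)=\mathcal Y^{t,x}(s)$, which lets one differentiate along the characteristics.

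Item 2 I expect to be the main obstacle. The standard vanishing-viscosity argument runs as follows. Take $\phi\in C^{1,2}$ with $u-\phi$ having a strict local maximum at $(t_0,\mathbf 1(x_0))$. Local uniform convergence $u^\varepsilon\to u$, which follows from item 1 and the equicontinuity established in item 3, gives maxima $(t_\varepsilon,x_\varepsilon)\to(t_0,x_0)$ of $u^\varepsilon-\phi$. One then uses \eqref{PDE} for $u^\varepsilon$ at those points. The second-order term carries the factor $\varepsilon$ in $\tfrac{\varepsilon}{2}a\,\partial_x^2\phi$ and vanishes in the limit. The coefficient terms $b^\varepsilon$ and $f^\varepsilon$ converge to $b^0$ and $f^0$ because $v^\varepsilon=\sqrt\varepsilon\,\nabla_xu^\varepsilon\sigma\to0$ uniformly, using the bound $\kappa_1$ of Remark \ref{R1}. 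The delicate point is that these PDEs are only written on constant paths $\mathbf 1(x)$, while the delay makes the coefficients genuinely path-dependent. I would therefore restrict everything to the class of constant initial paths, where the problem reduces to a finite-dimensional parabolic comparison in $(t,x)$, and use the delayed Lipschitz bound to control the error from the past trajectory. If that fails, the fallback is to prove the sub- and supersolution inequalities directly from the dynamic programming identity $u(t,\mathbf 1(x))=\mathcal Y^{t,x}(t)=g(\mathcal X^{t,x}_T)+\int_t^T f\,dr$ along the characteristics.
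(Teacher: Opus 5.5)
Your proposal is correct at the paper's own level of rigour. Like the paper, it tacitly needs the smallness condition \eqref{cc} and, for item 2, the gradient bound coming from (A4). For items 2 and 3 it is the paper's argument: equicontinuity of $u^\varepsilon$ from Lemmas \ref{l1}, \ref{l3} and the uniform bound on $u^\varepsilon$, then stability of viscosity solutions under locally uniform convergence. You carry that stability out by hand using $u^\varepsilon\in C^{1,2}_b$, where the paper cites Cordoni et al.\ and Crandall--Ishii--Lions. Since \eqref{PDE} and \eqref{u1} are only posed at constant paths, this argument is purely finite-dimensional in $(t,x)$, as in the paper's closing remark. So there is no extra ``past trajectory'' error to control, and your dynamic-programming fallback is not needed.

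The genuine difference is in item 1. The paper uses Lemma \ref{l4} only to get a Cauchy family. It then identifies the limit by letting $\varepsilon\to0$ in \eqref{Eq1}, kills $Z^{t,x,\varepsilon}$ through the representation $\sqrt{\varepsilon}\,\nabla_xu^\varepsilon\sigma$ (hence through Delarue's gradient estimate), and asserts an a.s.\ limit of $\int Z^{t,x,\varepsilon}dW$ that really requires a subsequence. You instead run the estimate of Lemma \ref{l4} against the $\varepsilon=0$ solution $(\mathcal X^{t,x},\mathcal Y^{t,x},0)$ of \eqref{Eq1}. This uses the existence result for \eqref{u}, but it buys several things:
\begin{itemize}
\item an explicit rate $C\varepsilon$;
\item $Z^{t,x,\varepsilon}\to0$ in $\mathcal H^2$ from the BSDE estimate alone;
\item identification of the limit without passing to the limit in the equations;
\item locally uniform convergence of $u^\varepsilon$ directly, which makes Arzel\`a--Ascoli superfluous.
\end{itemize}
Item 4 also differs. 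The paper really proves uniqueness of $C^{1,1}_b$ solutions, by differentiating a candidate classical solution $v$ along the characteristics $s\mapsto v(s,\mathcal X^{t,x}_s)$. Your remark that a $C^1$ viscosity solution of a first-order equation is classical (take $u$ itself as test function) proves what item 4 actually states.

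There are two small slips, neither affecting the conclusion.
\begin{itemize}
\item In your alternative route for item 1, determinism of $Y^{t,x,\varepsilon}(t)$ does not imply that the limiting $Z$ vanishes. The correct reason is that the limit solves \eqref{Eq1} with $\varepsilon=0$, whose unique solution is $(\mathcal X^{t,x},\mathcal Y^{t,x},0)$.
\item In item 3, (A1)(iii) only makes your second bracket $O(|t-t'|^{1/2})$, not $O(|t-t'|)$. However, if you order $t\le t'$ that bracket vanishes identically, because $Y^{t',x}(s)=Y^{t',x}(t')$ for $s\le t'$ by the convention in \eqref{Eq1}.
\end{itemize}
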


\begin{proof}
\begin{itemize}
\item [(i)] Recalling Lemma \ref{l4}, one derive that for a fixed $(t,x)\in [0,T]\times\R,\;(X^{t,x,\varepsilon},Y^{t,x,\varepsilon})$, solution of \eqref{Eq1}, is a Cauchy sequence on the Banach space $\mathcal{S}^2(\R)\times\mathcal{S}^2(\R)\times$. Therefore it converges to the  processus $(\mathcal{X}^{t,x}, \mathcal{Y}^{t,x})$. On the other hand, since $\sigma$ is bounded, it follows from Remark \ref{R1} that $Z^{t,x,\varepsilon}(s)$ converge to $0$. Therefore $(X^{t,x,\varepsilon},Y^{t,x,\varepsilon},Z^{t,x,\varepsilon})$ converges to $(\mathcal{X}^{t,x}, \mathcal{Y}^{t,x}, 0)$.

Next, under assumption $({\bf A4})$, passing to the point-wise limit as $\varepsilon\rightarrow 0$ in the forward equation of\eqref{Eq1} yields
\begin{eqnarray*}
\mathcal{X}^{t,x}(s)=x+\int^{s}_{t}b(r,\mathcal{X}^{t,x}(r),\mathcal{Y}^{t,x}(r),0)dr,\;\; s\in[t,T].
\end{eqnarray*}
Moreover, by Itô's isometry,
\begin{eqnarray*}
\E\left(\left|\int^T_sZ^{t,x,\varepsilon}(r)dW(r)\right|^2\right)=\E\left(\int^T_s|Z^{t,x,\varepsilon}(r)|^2dr\right)\rightarrow 0,\;\;\;\; \mbox{as}\;\; \varepsilon\rightarrow 0.
\end{eqnarray*}
Hence, 
\begin{eqnarray*}
\int^T_sZ^{t,x,\varepsilon}(r)dW(r)\rightarrow 0,\;\P-\mbox{a.s};\;\;\;\; \mbox{as}\;\; \varepsilon\rightarrow 0.
\end{eqnarray*}
Combining this convergence with the continuity of $f$ and $g$, we deduce that the backward equation in \eqref{Eq1} converges to 
\begin{eqnarray*}
\mathcal{Y}^{t,x}(s)=g(\mathcal{X}^{t,x}_T)+\int_{s}{T}f(r,\mathcal{X}^{t,x}(r),\mathcal{Y}^{t,x}(r),0)dr, \;\;\; [t,T].
\end{eqnarray*}
$\varepsilon\rightarrow 0$. Therefore, the pair $(\mathcal{X}^{t,x},\mathcal{Y}^{t,x})$ is a solution of \eqref{u}.
\item [(ii)] Combining Lemma \ref{l1} and Lemma \ref{l3} together with the definition of $u^{\varepsilon}$, we obtain
\begin{eqnarray}\label{Uniforme}
|u^{\varepsilon}(t,{\bf 1}(x))-u^{\varepsilon}(t',{\bf 1}(x'))|^{2}\leq \alpha|x-x'|^2+\beta(1+|x|^2)|t-t'|^2,
\end{eqnarray}
where $\alpha,\; \beta >0$ are constants depends only $K$ and $\Lambda$.
This estimate shows that the family $u^{\varepsilon}$ is equicontinuous on every compact subset of  $[0,T]\times L_{-T}(\R^n)$. Therefore, by the Arzela-Ascoli theorem, we conclude that $u^{\varepsilon}$ converges uniformly on $[0, T]\times \mathcal{K}$, for every compact subset $\mathcal{K}\subset L_{-T}(\R^n)$, toward the function $u$ defined by 
\begin{eqnarray*}
u(t, {\bf 1}(x)) = Y^{t,x}(t).
\end{eqnarray*}
Passing to the limit as $\varepsilon 0$ in \eqref{Uniforme}, we obtain
\begin{eqnarray}\label{Uniforme}
|u(t,{\bf 1}(x))-u(t',{\bf 1}(x'))|^{2}\leq \alpha|x-x'|^2+\beta(1+|x|^2)|t-t'|^2,
\end{eqnarray}
for all $(t, x), (t', x')$ in $[0, T]\times \R^n$.  Consequently, the limit function u is Lipschitz continuous with respect to the spatial variable x and uniformly continuous with respect to the time variable t. Moreover, the boundedness of $u$ follows directly from \eqref{Bound1}, since the latter estimate is uniform with respect to $\varepsilon$. Furthermore, by Theorem 9 in \cite{card1} or Theorem 11 \cite{card2}, the function $u^{\varepsilon}$ is a viscosity solution on $[0, T]\times L_{-T}(\R^n)$ of \eqref{u}. Since the coefficients of the associated quasilinear parabolic system are Lipschitz continuous, the stability property of viscosity solutions under locally uniform convergence applies (see \cite{Cral}). Therefore, using the compact uniform convergence of $u^{\varepsilon}$ toward $u$, we conclude that $u$ is a viscosity solution of \eqref{u1}.

Moreover, let $v: [0, T]\times L_{-T}(\R)\rightarrow \R$ be a solution of class $C^{1,1}_b([0, T]\times L_{-T}(\R), \R)$ to \eqref{u1}, which is Lipschitz continuous with respect to the spatial variable and uniformly continuous with respect to the time variable. Fix (t,x) $(t,x)\in [0, T]\times\R$, and define
\begin{eqnarray*}
	h:[t,T]\rightarrow \R , \;\; h(s)=v(s, \mathcal{X}^{t,x}_s).
\end{eqnarray*}
Using the chain rule in the path space setting, we obtain
\begin{eqnarray*}
\frac{dh(s)}{ds}=\frac{\partial v(s,\mathcal{X}^{t,x}_s)}{\partial s}+\mathcal{S}(v)(s,\mathcal{X}^{t,x}_s)+b(s,\mathcal{X}^{t,x}_s,\mathcal{Y}^{t,x}_s,\mathcal{Z}^{t,x}_s)\frac{\partial v(s,\mathcal{X}^{t,x}_s)}{\partial x},
\end{eqnarray*}
where $\mathcal{S}$ est l'opérateur de translation associé au segment $X_s$. 
Evaluating the above identity at $s=t$, and observing that, we have $\mathcal{S}(v)(t,\mathcal{X}^{t,x}_t)=0$, since $\mathcal{X}^{t,x}_t$ is constant on $[-T,0]$, we obtain
\begin{eqnarray}\label{Z1}
\frac{dh(s)}{ds}&=&\frac{\partial v(t,\mathcal{X}^{t,x}_t)}{\partial t}+b(t,\mathcal{X}^{t,x}_t,\mathcal{Y}^{t,x}_t,Z^{t,x}_t)\frac{\partial v(t,\mathcal{X}^{t,x}_t)}{\partial x}\nonumber\\
&=&-f(t,\mathcal{X}^{t,x}_t,\mathcal{Y}^{t,x}_t,Z^{t,x}_t).
\end{eqnarray}
Next, recalling  $\mathcal{X}^{t,x}_t={\bf 1}(x),\; \mathcal{Y}^{t,x}_t={\bf 1}(\mathcal{Y}^{t,x}(t))$ and $\mathcal{Z}^{t,x}_t={\bf 1}(0)$, equation \eqref{Z1} reduces to
\begin{eqnarray*}
\frac{dh(s)}{ds}&=&\frac{\partial v(t,{\bf 1}(x))}{\partial t}+b(t,{\bf 1}(x),{\bf 1}(\mathcal{Y}^{t,x}(t)),{\bf 1}(0))\frac{\partial v(s,\mathcal{X}^{t,x}_s)}{\partial x}\nonumber\\
&=&-f(t,{\bf 1}(x),{\bf 1}(\mathcal{Y}^{t,x}(t)),{\bf 1}(0)).
\end{eqnarray*}
Moreover, 
\begin{eqnarray*}
	v(T,X^{T,x}_T)=g(X^{T,x}_T)=g({\bf 1}(x)).
\end{eqnarray*}
Therefore, under the assumption of uniqueness for the system of ordinary differential equations associated with \eqref{u1}, we conclude that
\begin{eqnarray*}
	v(t,{\bf 1}(x))=u(t,{\bf 1}(x)).
\end{eqnarray*}
Consequently, equation \eqref{u1} admits at most one solution in the class $C^{1,1}_b([0, T]\times \R^n)$ of functions which are Lipschitz continuous with respect to the spatial variable and uniformly continuous with respect to the time variable. 
\end{itemize}
\end{proof}

\subsection{Large deviation principle}

In this section, we study the Freidlin-Wentzell's large deviation principle for the laws of the family of processes $(X^{t,x,\varepsilon}(.), Y^{t,x,\varepsilon}(.))_{\varepsilon\in (0,1]}\in \mathcal{S}^{2}(t,T,\R)\times \mathcal{S}^{2}(t,T,\R)$, where $(X^{t,x,\varepsilon}(.), Y^{t,x,\varepsilon}(.), Z^{t,x,\varepsilon}(.))$ is the unique solution of equation \eqref{Eq1}.

In order to avoid an unnecessarily lengthy exposition, we shall rely on the definitions associated with this notion as given in Definitions 4.1 and 4.2 of \cite{Cal}. Next we recall the following result from Theorem 3. 1 in  \cite{MA}.
\begin{propo}
Assume that conditions $({\bf A1})$-$({\bf A3})$ are satisfied and the notation of \eqref{N1}. Then, the family $(X^{\varepsilon,t,x}(.) : 0 <\varepsilon< 1)$ of random variables of the solutions of the following perturbed delayed SDE 
\begin{eqnarray*}
X^{t,x,\varepsilon}(s)&=& x+\int^{s\vee t}_{t} b^{\varepsilon}(r,X_r^{t,x,\varepsilon})dr+\sqrt{\varepsilon}\int^{s\vee t}_t \sigma^{\varepsilon}(r,X_r^{t,x,\varepsilon})dW(r) 
\end{eqnarray*}
obey a large deviations principle in $C([t, T]; R^n)$, with the good rate function $I$ defined as
\begin{eqnarray*}
I_1(\phi)=
\left\{
\begin{array}{ll}
\displaystyle\inf\left\{\frac{1}{2}\int^T_t\|\varphi(s)\|^2ds,\;\; \phi'(s)=b(s,\phi(s))+\sigma(s,\phi(s))\varphi(s),\; s\in [t,T]\right\}&,\;\; \varphi\in L^2([t,T],\R^n)\\\\
+\infty	& otherwise.
\end{array}
\right.
\end{eqnarray*}
Moreover, the level sets of $I$ are compact and for every Borel subset $A$ of $C([t, T]; \R^n)$, we have
\begin{eqnarray*}
-\inf_{g\in }I(g)\leq \liminf_{\varepsilon\rightarrow 0}\varepsilon\log(\P (X^{t,x,\varepsilon}\in A))\leq-\inf_{g\in \overline{A} }I(g)
\end{eqnarray*}
\end{propo}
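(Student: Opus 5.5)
The plan is to treat this as a Freidlin--Wentzell principle for a \emph{decoupled} delayed SDE with $\varepsilon$-dependent coefficients, and to reduce it to the constant-coefficient case by exponential equivalence. By the decoupling in \eqref{Eq11}, $X^{t,x,\varepsilon}$ solves the delayed SDE with coefficients $b^{\varepsilon},\sigma^{\varepsilon}$ from \eqref{N1}. These coefficients inherit the delayed Lipschitz property from $({\bf A1})$--$({\bf A2})$. For this we use that $u^{\varepsilon}$ is Lipschitz in the path variable uniformly in $\varepsilon$ (Lemma \ref{l1} and Remark \ref{R1}) and that $v^{\varepsilon}=\sqrt{\varepsilon}\,\nabla_x u^{\varepsilon}\,\sigma$ is bounded by $\sqrt{\varepsilon}\kappa_1C$. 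We also introduce the auxiliary process $\widehat X^{\varepsilon}$ solving the same equation with the frozen limit coefficients $b^{0},\sigma^{0}$. These do not depend on $\varepsilon$ and are Lipschitz with respect to $\alpha$ and bounded in $\sigma$.

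First, for $\widehat X^{\varepsilon}$ we apply the standard weak-convergence (Budhiraja--Dupuis) approach, as in Theorem 3.1 of \cite{MA}. Define the skeleton map $\mathcal G^0(h)=\phi$, where $\phi'(s)=b^0(s,\phi_s)+\sigma^0(s,\phi_s)h'(s)$ with $\phi_t={\bf 1}(x)$. Its well-posedness on $[t,T]$ follows from the delayed Lipschitz condition by a Picard/Gronwall argument on $\sup|\phi|$; the delay only contributes $\int\alpha(dr)$ of past values, which are controlled by the running supremum. Two facts then have to be checked:
\begin{itemize}
\item[(a)] compactness of $\{\mathcal G^0(h):\frac12\int|h'|^2\le N\}$, by Arzel\`a--Ascoli and continuity of $\mathcal G^0$ on weakly compact balls;
\item[(b)] for controls $h^{\varepsilon}\to h$ in distribution on these balls, the controlled processes $\widehat X^{\varepsilon,h^\varepsilon}$ (obtained by Girsanov) converge in law to $\mathcal G^0(h)$.
\end{itemize}
Step (b) uses BDG to kill the $\sqrt{\varepsilon}\,dW$ term, tightness in $C([t,T])$, and identification of the limit via the Lipschitz estimate. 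This yields the LDP for $\widehat X^{\varepsilon}$ with rate function $I_1$, interpreted with $(b^0,\sigma^0)$.

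Second, we show that $X^{t,x,\varepsilon}$ and $\widehat X^{\varepsilon}$ are exponentially equivalent:
\[
\limsup_{\varepsilon\to0}\varepsilon\log\P\Big(\sup_{t\le s\le T}|X^{t,x,\varepsilon}(s)-\widehat X^{\varepsilon}(s)|>\delta\Big)=-\infty.
\]
To do this we write the difference as $\int(b^\varepsilon(X^\varepsilon_r)-b^0(\widehat X^\varepsilon_r))dr$ plus a $\sqrt\varepsilon$-stochastic integral. We split the drift into $[b^\varepsilon-b^0](X^\varepsilon_r)$, which is bounded by $\rho(\varepsilon):=\sup\|b^\varepsilon-b^0\|_\infty\to0$ (uniform convergence of $u^\varepsilon\to u^0$ from the previous theorem together with $v^\varepsilon=O(\sqrt\varepsilon)$), and a Lipschitz part. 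The diffusion part is treated the same way. We then apply exponential martingale inequalities for stochastic integrals with bounded integrands (boundedness from $({\bf A4})$) together with a Gronwall argument in the running supremum to bound the probability by $\exp(-c(\delta)/(\varepsilon\,\rho(\varepsilon)^2))$ plus $\exp(-c/\varepsilon^{\cdot})$, stopping the process when the difference exceeds $\delta$. Exponential equivalence then transfers the LDP, good rate function and compact level sets included, to $X^{t,x,\varepsilon}$ (Theorem 4.2.13 in Dembo--Zeitouni).

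The main obstacle is the uniform convergence $b^\varepsilon\to b^0$ and $\sigma^\varepsilon\to\sigma^0$ on the whole path space rather than on compacts only. Arzel\`a--Ascoli in the previous theorem gives only compact-uniform convergence. I would first try to avoid global uniformity by localizing. Since $\sigma$ is bounded and $b$ has linear growth, $X^\varepsilon$ stays in a ball of paths with superexponentially small exceptional probability. Inside this ball the relevant constant-type paths form an $\R$-indexed family, so compactness in $x$ is available. Failing that, one can strengthen Lemma \ref{l4} with $\varepsilon'\to0$ into the uniform bound $\sup_{x}|u^\varepsilon(t,{\bf1}(x))-u^0(t,{\bf1}(x))|\le C\sqrt\varepsilon$, using that its constant $C$ does not depend on $x$ once $\sigma$ is bounded.
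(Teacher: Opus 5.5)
The paper gives no argument here beyond citing Theorem~3.1 of \cite{MA}. Your route (Budhiraja--Dupuis for the frozen coefficients $b^0,\sigma^0$, then exponential equivalence) would make the $\varepsilon$-dependence of $b^{\varepsilon},\sigma^{\varepsilon}$ explicit. However, the exponential-equivalence estimate you sketch does not give a superexponential bound.

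Write $D=X^{t,x,\varepsilon}-\widehat X^{\varepsilon}$, $dD=\beta_r\,dr+\sqrt{\varepsilon}\,\gamma_r\,dW(r)$, and $S_r=\sup_{u\le r}|D(u)|$. Only the pieces $[b^{\varepsilon}-b^0](X^{\varepsilon}_r)$ and $[\sigma^{\varepsilon}-\sigma^0](X^{\varepsilon}_r)$ are of size $\rho(\varepsilon)$. The Lipschitz part of the noise, $\sqrt{\varepsilon}\int[\sigma^0(X^{\varepsilon}_r)-\sigma^0(\widehat X^{\varepsilon}_r)]\,dW(r)$, has integrand bounded only by $LS_r\le L\delta$ before the stopping time. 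The exponential martingale inequality plus Gronwall in $S$ then gives $\P(\sup|D|\ge\delta)\le 2\exp\big(-(\delta e^{-LT}-T\rho(\varepsilon))^2/(2\varepsilon T(\rho(\varepsilon)+L\delta)^2)\big)$. Hence you only get $\limsup_{\varepsilon\to0}\varepsilon\log\P(\sup|D|\ge\delta)\le -e^{-2LT}/(2L^2T)$, a fixed finite rate independent of $\delta$, not $-\infty$. So the bound $\exp(-c(\delta)/(\varepsilon\rho(\varepsilon)^2))$ does not follow from the tools you name, and exponential equivalence is not established.

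What is missing is the multiplicative Lyapunov estimate from Dembo--Zeitouni's proof of the Freidlin--Wentzell theorem. Apply It\^o's formula to $(\rho(\varepsilon)^2+|D|^2)^{1/\varepsilon}$, using $|\beta_r|,|\gamma_r|\le M(\rho(\varepsilon)^2+S_r^2)^{1/2}$. Because of the delay these bounds involve $S_r$ rather than $|D(r)|$, so you must run BDG and Gronwall on $\E\sup_{u\le s\wedge\tau}(\rho(\varepsilon)^2+|D(u)|^2)^{1/\varepsilon}$ rather than on its value at time $s$. This yields $\varepsilon\log\P(\sup|D|\ge\delta)\le C+\log\big(\rho(\varepsilon)^2/(\rho(\varepsilon)^2+\delta^2)\big)$ with $C$ independent of $\varepsilon$, which does tend to $-\infty$.

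Your first fix for the global convergence $\rho(\varepsilon)\to0$ also fails as stated. The segments $X^{\varepsilon}_r$ are not constant paths, and a bounded ball of $C([-T,0])$ is not compact, so compact-uniform convergence of $u^{\varepsilon}$ gives nothing on it. Replace the ball by the compacts $\mathcal K_N\subset C([t,T])$ coming from exponential tightness of $\{X^{t,x,\varepsilon}\}$. This tightness is available because $|u^{\varepsilon}|\le\kappa$, $|v^{\varepsilon}|\le\sqrt{\varepsilon}\kappa_1C$ and $\sigma$ is bounded. The segments of paths in $\mathcal K_N$ form a compact set, and $\limsup_{\varepsilon\to0}\varepsilon\log\P(X^{t,x,\varepsilon}\notin\mathcal K_N)\le -N$ with $N\to\infty$ suffices. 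Your second fix needs Lemma~\ref{l4} for arbitrary initial segments, which the paper does not prove.

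Finally, both difficulties disappear if you drop the exponential-equivalence step. The Budhiraja--Dupuis criterion allows $\varepsilon$-dependent maps $\mathcal G^{\varepsilon}$, so put $b^{\varepsilon},\sigma^{\varepsilon}$ directly into the controlled equation of your step (b). Identifying the limit then needs only $b^{\varepsilon}\to b^0$ and $\sigma^{\varepsilon}\to\sigma^0$ uniformly on compact sets of segments (tightness of the controlled processes provides the compacts), together with continuity of $b^0,\sigma^0$.
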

For the proof we refer the reader to \cite{MA}.

We conclude by establishing a large deviation principle for the backward component of system \eqref{Eq11}. To this end, we consider the function $$F^{\varepsilon}, F:C([t, T]; \R^n)\rightarrow C([t, T];\R^n)$$ defined respectively by  
\begin{eqnarray*}
F^{\varepsilon}(\psi)(s)=u^{\varepsilon}(s,\psi_s)
\end{eqnarray*}
and 
\begin{eqnarray*}
F(\psi)(s)=u(s,\psi_s)
\end{eqnarray*}
where $\psi_s=(\psi(s+\theta))_{\theta\in [-T,0]}$ belongs in $L_{-T}(\R^n)$,$u^{\varepsilon}$ and $u$ are given by \eqref{PDE} and \eqref{u1} respectively. As we have seen before we have 
 \begin{eqnarray*}
 Y^{\varepsilon,t,x}(t)=F^{\varepsilon}(X^{\varepsilon,t,x})(t)\;\;\mbox{and}\;\;	\mathcal{Y}^{t,x}(t)=F(\mathcal{X}^{t,x})(t)
 \end{eqnarray*}
 Next, in order to study the convergence of $F^{\varepsilon}$ to $F$, we set the following
\begin{eqnarray*}
	\|F^{\varepsilon}(\psi)-F(\psi)\|=\sup_{s\in [t,T]}|u^{\varepsilon}(s,\psi_s)-u(s,\psi_s)|,\;\; \psi\in C([t, T];\R^n).
\end{eqnarray*}
or 
\begin{eqnarray*}
\|F^{\varepsilon}(\psi)-F(\psi)\|=\sup_{s\in [t,T]}|Y^{\varepsilon,s,\psi(s)}(s)-\mathcal{Y}^{s,\psi(s)}(s)|,\;\; \psi\in C([t, T];\R^n).
\end{eqnarray*}
We have 
\begin{theo}
Assume that conditions $({\bf A1})$-$({\bf A3})$ are satisfied. Then, the family $(Y^{\varepsilon,t,x}(.))_{0<\varepsilon<1})$ satisfy, as $\varepsilon$ goes to $0$, a large deviation principle with a rate function
\begin{eqnarray*}
	I_2(\psi) = \inf\left\{I_1(\phi): F(\phi)(s) = \psi(s) = u(s, \phi(s)),\;\; s\in  [t, T]\right\}.
\end{eqnarray*}
\end{theo}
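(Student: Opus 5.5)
The plan is to transfer the large deviation principle of the previous Proposition from the forward family $(X^{t,x,\varepsilon})$ to the backward family $(Y^{t,x,\varepsilon})$. The tool is the \emph{extended contraction principle}, in the version used by Cruzeiro et al.\ \cite{Cal}. The starting point is the representation $Y^{t,x,\varepsilon}(s)=u^{\varepsilon}(s,X^{t,x,\varepsilon}_s)=F^{\varepsilon}(X^{t,x,\varepsilon})(s)$ for $s\in[t,T]$, which follows from the Markov-type identity established before \eqref{Eq11}. Thus $Y^{t,x,\varepsilon}$ is the image of $X^{t,x,\varepsilon}$ under the $\varepsilon$-dependent maps $F^{\varepsilon}$. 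The extended contraction principle states the following. Suppose a family satisfies an LDP with good rate $I_1$ on $C([t,T];\R)$, each $F^{\varepsilon}$ is measurable, the limit $F$ is continuous, and
\begin{eqnarray*}
\lim_{\varepsilon\rightarrow 0}\sup_{\{\psi:\,I_1(\psi)\leq a\}}\|F^{\varepsilon}(\psi)-F(\psi)\|=0\quad\mbox{for every } a<\infty .
\end{eqnarray*}
Then the image family satisfies an LDP with good rate $I_2(\psi)=\inf\{I_1(\phi):F(\phi)=\psi\}$. One also needs $F^{\varepsilon}\to F$ uniformly on bounded sets, or an exponential-equivalence substitute, to handle paths outside the level sets. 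I will verify these hypotheses in the following order.

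\emph{Continuity and measurability.} For each $s$ the map $\psi\mapsto u^{\varepsilon}(s,\psi_s)$ is continuous, since $u^{\varepsilon}\in C^{1,2}_b$ by Remark \ref{R1}. Measurability of $F^{\varepsilon}$ follows. For $F$, I use the estimate \eqref{Uniforme} obtained in the almost sure convergence theorem. It gives Lipschitz dependence of $u$ in the spatial variable and $|t-t'|$-H\"older dependence in time. Hence if $\psi^n\to\psi$ uniformly on $[t,T]$, then $\sup_s|u(s,\psi^n_s)-u(s,\psi_s)|\le \sqrt{\alpha}\,\sup_s\|\psi^n_s-\psi_s\|\to 0$. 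I will need to extend \eqref{Uniforme} from constant segments ${\bf 1}(x)$ to general segments $\psi_s$. The plan is to redo Lemma \ref{l1} with initial segment $\psi_s$ in place of ${\bf 1}(x)$; the Lipschitz bound is derived exactly as before using $({\bf A1})$--$({\bf A3})$. Continuity of $s\mapsto F(\psi)(s)$ then combines this with Lemma \ref{l3}.

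\emph{Uniform convergence $F^{\varepsilon}\to F$.} By Lemma \ref{l4} with $\varepsilon'\to\varepsilon$ replaced by comparison with the limit $\varepsilon=0$, which is justified by the Cauchy argument of the almost sure convergence theorem, evaluation at the initial time gives
\begin{eqnarray*}
|u^{\varepsilon}(s,\psi_s)-u(s,\psi_s)|^2=|Y^{s,\psi_s,\varepsilon}(s)-\mathcal{Y}^{s,\psi_s}(s)|^2\leq C\varepsilon .
\end{eqnarray*}
Here $C$ depends on $\psi$ only through $\sup|\psi|$, by Lemma \ref{l2}. Taking the supremum over $s\in[t,T]$ and over $\psi$ in a level set of $I_1$ yields the required convergence. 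This works because level sets are compact, hence bounded in sup norm. Since the constant is in fact uniform on bounded sets, combining this with the exponential tightness coming from the LDP of $X^{\varepsilon}$ shows that $Y^{\varepsilon}$ and $F(X^{\varepsilon})$ are exponentially equivalent. The ordinary contraction principle applied to the continuous $F$, with \cite[Thm 4.2.13]{Cal}-type exponential equivalence, then gives the LDP with rate $I_2$. Goodness of $I_2$ follows because it is the image of a good rate function under a continuous map.

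\emph{Main obstacle.} The delicate step is making the bound $\sup_\psi|u^{\varepsilon}(s,\psi_s)-u(s,\psi_s)|\le C\sqrt{\varepsilon}$ genuinely uniform over general path segments rather than constant ones. Lemmas \ref{l2} and \ref{l4} are stated only for constant initial data ${\bf 1}(x)$. I would first restate them with an arbitrary bounded initial segment in $L^{\infty}_{-T}$, with constants depending on $\|\psi\|_\infty$. The proofs go through verbatim under the smallness conditions \eqref{c2} and \eqref{cc}, since only the delayed Lipschitz structure is used. The boundedness of $u^{\varepsilon}$ from \eqref{Bound1} would be used to control the $g$-terms uniformly.
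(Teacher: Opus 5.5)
Your proposal is correct and follows essentially the same route as the paper. The paper invokes the extended contraction principle (Lemma 4.1 of \cite{Cal}), reduces the theorem to continuity of the maps and uniform convergence of $F^{\varepsilon}$ to $F$ on compacts of the path space, and defers the details to \cite{MA}; your variant (continuity of the limit $F$ plus exponential equivalence of $F^{\varepsilon}(X^{t,x,\varepsilon})$ and $F(X^{t,x,\varepsilon})$ via exponential tightness, with the bound $|u^{\varepsilon}(s,\psi_s)-u(s,\psi_s)|^2\le C\varepsilon$ from Lemma \ref{l4} at $\varepsilon=0$ and the extension of the lemmas to non-constant initial segments) supplies exactly the details the paper leaves implicit.
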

\begin{proof}
By virtue of the contraction principle (see Lemma 4.1 in \cite{Cal} or Lemma 3.1 in \cite{MA} for general case), we just need to show that for all $\varepsilon\in (0, 1]$, the function $F^{\varepsilon}$ is continuous and the family $(F{\varepsilon})_{\varepsilon\in(0,1]}$  converges uniformly to $F$ on every compact of $C([0, T]; \R^n)$, as $\varepsilon$ tends to zero.As the proof proceeds along essentially the same lines as that of Theorem 3.3 in \cite{MA}, we refer the reader to that result for the relevant details. Consequently, the proof is omitted here.
\end{proof}

\begin{rem}
 Our approach is inspired by the PDE method developed by Cruzeiro et al.~\cite{Cal}, which relies on the decoupling of the fully coupled FBSDE through the associated semilinear parabolic equation. In the delayed setting, however, the presence of memory destroys the finite-dimensional Markov property of the forward component. To recover a Markovian formulation, one has to enlarge the state space by introducing the segment process $ X_t=(X(t+\theta))_{\theta\in[-T,0]}$, which takes values in the infinite-dimensional space $C([-T,0];\mathbb{R})$. Consequently, the associated decoupling field is naturally characterized as the solution of a semilinear PDE posed on \[ [0,T]\times C([-T,0];\mathbb{R}^n). \] Under assumptions $(\mathbf{A1})$-(ii) and $(\mathbf{A2})$-(ii), the delayed coefficients depend only on constant trajectory segments. Defining \[ \bar u^\varepsilon(t,x)=u^\varepsilon(t,\mathbf{1}(x)), \] together with \[ \bar{\phi}(t,x,\bar u^\varepsilon,\bar v^\varepsilon) =\phi\!\left(t,\mathbf{1}(x),\mathbf{1}(\bar u^\varepsilon), \mathbf{1}(\bar v^\varepsilon)\right),\qquad \phi\in\{b,f\}, \] \[ \bar{\sigma}(t,x,\bar u^\varepsilon) =\sigma\!\left(t,\mathbf{1}(x),\mathbf{1}(\bar u^\varepsilon)\right), \qquad \bar g(x)=g(\mathbf{1}(x)), \] the infinite-dimensional PDE \eqref{PDE} reduces to the classical finite-dimensional semilinear equation \[ \left\{ \begin{aligned} &\frac{\partial\bar u^\varepsilon}{\partial t} +\bar b\,\frac{\partial\bar u^\varepsilon}{\partial x} +\frac12\bar a\,\frac{\partial^2\bar u^\varepsilon}{\partial x^2} +\bar f=0,\\ &\bar u^\varepsilon(T,x)=\bar g(x). \end{aligned} \right. \] The analysis of the fully infinite-dimensional problem, where the coefficients genuinely depend on the entire past trajectory, requires substantially different techniques and will be addressed in future work. \end{rem}


\bigskip
	
\begin{thebibliography}{99}   

\bibitem{AHJ} A. Aman, H. Coulibaly and J. Djordjevic, General fully coupled forward-backward stochastic differential equations with delayed generator, \emph{Stoch. Dyn.} \textbf{23} (2023), no.\textasciitilde 2, 2350012.

\bibitem{Antonelli} F. Antonelli, Backward-forward stochastic differential equations, \emph{Ann. Appl. Probab.} \textbf{3} (1993), pp. 777-793. 

\bibitem{card1} F. Cordoni, L. Di Persio, A BSDE with delayed generator approach to pricing under counterparty risk and collateralization, \emph{International Journal of Stochastic Analysis} (2016), Article ID 1059303, 11 pages.

\bibitem{card2} F. Cordoni, L. Di Persio, L. Maticiuc, and A. Zalinescu, A stochastic approach to path-dependent nonlinear Kolmogorov equations via BSDEs with time-delayed generators and applications to finance, \emph{Stochastic Process. Appl.} \textbf{130} (2020),  1669-1712.


\bibitem{Cral} M.G. Crandall, H. Ishii and P.-L. Lions, User's guide to viscosity solutions of second order partial differential equations,\emph{Bull. Amer. Math. Soc. (N.S.)} \textbf{27} (1992), 1--67.
		
\bibitem{Cal} A.B. Cruzeiro, A.O. Gomes and L. Zhang, Asymptotic properties of coupled forward-backward stochastic differential equations, \emph{Stoch. Dyn.} \textbf{14} (2014), no.\textasciitilde 3, 1450004.
 
\bibitem{Delarue1} F. Delarue, On the existence and uniqueness of solutions to FBSDEs in a nondegenerate case, \emph{Stochastic Process. Appl.} \textbf{99} (2002), 209--286.

\bibitem{Delarue2} F. Delarue, Estimates of the solutions of a system of quasi-linear PDEs, in A Probabilistic Scheme, Séminaire des Probabilités XXXVII, Lecture Notes in Mathematics, Vol. 1832 (Springer, 2003), pp. 290-332.
		
\bibitem{D1} L. Delong, BSDEs with time-delayed generators of a moving average type with applications to non-monotone preferences, \emph{Stochastic Models} \textbf{28} (2012), 281--315.

\bibitem{D2} L. Delong, Applications of time-delayed backward stochastic differential equations to pricing, hedging and portfolio management \emph{Appl. Math.} \textbf{39} (2012), 463--488.
		
\bibitem{DI1} L. Delong and P. Imkeller, Backward stochastic differential equations with time-delayed generators: results and counterexamples,
\emph{Ann. Appl. Probab.} textbf{20} (2010), no.\textasciitilde 4, 1512--1536.
		
\bibitem{DI2}L. Delong and P. Imkeller, On Malliavin differentiability of BSDEs with time-delayed generators driven by Brownian motions and Poisson random measures, \emph{Stochastic Process. Appl.} \textbf{120} (2010), 1748--1775.
		
		
\bibitem{el1} N. El Karoui, E. Pardoux and M.C. Quenez, Reflected backward SDEs and American options, in: D. Dempster and S. Pliska (Eds.), \emph{Numerical Methods in Finance}, Publications of the Newton Institute, Cambridge University Press, Cambridge, 1997, pp. 215--231.
		
\bibitem{Kal} N. El Karoui, S. Peng and M.C. Quenez, Backward stochastic differential equations in finance, \emph{Math. Finance} \textbf{7} (1997), no.\textasciitilde 1, 1--71.
		
\bibitem{KJL} N. El Karoui, M. Jeanblanc and V. Lacoste, Optimal portfolio management with American capital guarantee, \emph{J. Econom. Dynam. Control} \textbf{29} (2005), no.\textasciitilde 3, 449--468.


		
\bibitem{el3} N. El Karoui and S. Hamadène, BSDEs and risk-sensitive control, zero-sum and nonzero-sum game problems of stochastic functional differential equations, \emph{Stochastic Process. Appl.} \textbf{107} (2003), no.\textasciitilde 1, 145--169.


\bibitem{HP}  Y. Hu and S. Peng, Solution of forward-backward stochastic differential equations, \emph{Probab. Theory Related Fields} \textbf{103} (1995), 273--283.
		
\bibitem{MA} C. Manga and A. Aman, Large deviations for stochastic differential equations with the general delayed generator, \emph{Random Oper. Stoch. Equ.} \textbf{28} (2020), no.\textasciitilde 3, 197--207.

\bibitem{MAT}
C. Manga, A. Aman and N. Tuo, Asymptotic behavior for delayed backward stochastic differential equations, \emph{Commun. Statist. Simul. Comput.} \textbf{54} (2025), no.\textasciitilde 7, 2491--2506.

\bibitem{MPY} J. Ma, P. Protter and J. Yong, Solving forward-backward stochastic differential equations explicitly: A four-step scheme, \emph{Probab. Theory Related Fields} \textbf{98} (1994), 339-359.

\bibitem{MY} J. Ma, J. Yong, Solvability of forward-backward SDE's and the nodal set of Hamilton-Jacobi-Bellman equations, \emph{Chin. Ann. Math.} \textbf{16B} (1995), pp. 279-298 
		

\bibitem{Ma} J. Ma, Z. Wu, D. Zhang and J. Zhang, On well-posedness of forward-backward SDEs: A unified approach, \emph{Ann. Appl. Probab.} \textbf{25} (2015), no.\textasciitilde 4, 2168--2214.

\bibitem{PW} S. Peng and Z. Wu, Fully coupled forward-backward stochastic differential equations and applications to optimal control, \emph{SIAM Journal on Control and Optimization} \textbf{37} (1999), 825-843.
		
\end{thebibliography}
\end{document}